\numberwithin{equation}{section}
\theoremstyle{plain}
\newtheorem*{rep@theorem}{\rep@title}
\newcommand{\newreptheorem}[2]{%
\newenvironment{rep#1}[1]{%
 \def\rep@title{#2 \ref{##1}}%
 \begin{rep@theorem}}%
 {\end{rep@theorem}}}
\newcommand{\Balpha}{\mbox{$\hspace{0.1em}\rule[0.01em]{0.05em}{0.39em}\hspace{-0.21em}\alpha$}}
\newtheorem{theorem}[equation]{Theorem}
\newtheorem{thm}[equation]{Theorem}
\newtheorem{proposition}[equation]{Proposition}
\newtheorem{lemma}[equation]{Lemma}
\newtheorem{corollary}[equation]{Corollary}
\newtheorem{openproblem}[equation]{Open problem}
\newtheorem{claim}[equation]{Claim}
\newtheorem{exercise}[equation]{Exercise}
\theoremstyle{remark}
\newtheorem{remark}[equation]{Remark}
\theoremstyle{definition}
\newtheorem{definition}[equation]{Definition}
\newtheorem{defn}[equation]{Definition}
\newtheorem*{question*}{Question}
\newtheorem{example}[equation]{Example}
\newcommand{\RR}{\mathbb{R}}
\newcommand{\Grad}{\nabla}
\newcommand{\abs}[1]{\lvert#1\rvert}
\newcommand{\ip}[1]{\langle#1\rangle}
\newcommand{\norm}[1]{\lVert#1\rVert}
\newcommand{\zu}{Z_\ast}
\newcommand{\K}{{\mathcal K}}
\newcommand{\M}{{\mathcal M}}
\newcommand{\R}{\mathbb R}
\renewcommand{\S}{{\mathcal S}}
\newcommand{\dist}{\operatorname{dist}}
\newcommand{\Int}{\operatorname{Int}}
\newcommand{\al}{\alpha}
\newcommand{\D}{\partial}
\newcommand{\de}{\delta}
\newcommand{\Lap}{\Delta}
\newcommand{\eps}{\varepsilon}
\newcommand{\la}{\lambda}
\newcommand{\ol}{\overline}
\newcommand{\ra}{\rightarrow}
\providecommand{\abs}[1]{\lvert #1\rvert}
\providecommand{\norm}[1]{\lvert\lvert #1\rvert\rvert}
\def\XXint#1#2#3{{\setbox0=\hbox{$#1{#2#3}{\int}$}
     \vcenter{\hbox{$#2#3$}}\kern-.5\wd0}}
\begin{document}

\title[Mean curvature flow]{Lectures on mean curvature flow}
\author{Robert Haslhofer}

\thanks{I thank Bruce Kleiner for very fruitful collaboration, Hojoo Lee, Luigi Ambrosio, Carlo Mantegazza and Andrea Mennucci for organizing the summer schools at KIAS Seoul and SNS Pisa, and all the participants for their questions and feedback.}

\date{\today}
\maketitle


\begin{abstract}
A family of hypersurfaces evolves by mean curvature flow if the velocity at each point is given by the mean curvature vector.
Mean curvature flow is the most natural evolution equation in extrinsic geometry, and has been extensively studied ever since the pioneering work of Brakke \cite{brakke} and Huisken \cite{Huisken_convex}.
In the last 15 years, White developed a far-reaching regularity and structure theory for mean convex mean curvature flow \cite{white_size,white_nature,white_subsequent}, and Huisken-Sinestrari constructed a flow with surgery for two-convex hypersurfaces \cite{huisken-sinestrari1,huisken-sinestrari2,huisken-sinestrari3}.
In this course, I first give a general introduction to the mean curvature flow of hypersurfaces and then present joint work with Bruce Kleiner \cite{HK1,HK2}, where we give a streamlined and unified treatment of the theory of White and Huisken-Sinestrari.
These notes are from summer schools at KIAS Seoul and SNS Pisa.
\end{abstract}

\tableofcontents

\section{Introduction to the mean curvature flow}

In this first lecture, I will give a quick informal introduction to the mean curvature flow. For more comprehensive introductions, I recommend the books by Ecker \cite{Ecker_book} and Mantegazza \cite{Mantegazza_book}.

A smooth family of embedded hypersurfaces $\{M_t\subset \R^{n+1}\}_{t\in I}$ moves by mean curvature flow if
\begin{equation}\label{eq_mcf}
\partial_t x = \vec{H}(x)
\end{equation}
for $x\in M_t$ and $t\in I$. Here, $I\subset \R$ is an interval, $\partial_t x$ is the normal velocity at $x$, and $\vec{H}(x)$ is the mean curvature vector at $x$.

If we write $\vec{H}=H\vec{\nu}$ for a unit normal $\vec{\nu}$, then $H$ is given by the sum of the principal curvatures, $H=\lambda_1+\ldots \lambda_n$.

\begin{example}[Shrinking spheres and cylinders] If $M_t=\partial B^{n+1}_{r(t)}\subset\R^{n+1}$, then \eqref{eq_mcf} reduces to an ODE for the radius, namely
$\dot{r}=-n/r$.
The solution with $r(0)=R$ is $r(t)=\sqrt{R^2-2nt}$, $t\in (-\infty,R^2/2n)$.
Similarly, we have the shrinking cylinders $M_t=\R^j\times\partial B^{n+1-j}_{r(t)}\subset\R^{n+1}$ with $r(t)=\sqrt{R^2-2(n-j)t}$, $t\in (-\infty,R^2/2(n-j))$.
\end{example}

\begin{exercise}[Grim reaper]
Show that for $n=1$ an explicit solution is given by $M_t=\textrm{graph}(u_t)$, where $u_t(p)=t-\log \cos p$ with $p\in(-\tfrac{\pi}{2},\tfrac{\pi}{2})$.
\end{exercise}

Instead of viewing the mean curvature flow as an evolution equation for the hypersurfaces $M_t$, we can also view it as an evolution equation for a smooth family of embeddings $X=X(\cdot,t): M^n \times I \rightarrow \RR^{n+1}$ with $M_t=X(M,t)$.
Setting $x=X(p,t)$, equation \eqref{eq_mcf}  then takes the form
\begin{equation}\label{eq_mcf2}
\partial_t X(p,t) = \Lap_{M_t} X(p,t).
\end{equation}

The fundamental idea of geometric flows is to deform a given geometric object into a nicer one, by evolving it by a heat-type equation.
This indeed works very well, as illustrated by the following theorem.

\begin{theorem}[Huisken \cite{Huisken_convex}]
Let $M_0\subset \R^{n+1}$ be a closed embedded hypersurface. If $M_0$ is convex, then the mean curvature flow $\{M_t\}_{t\in[0,T)}$ starting at $M_0$ converges to a round point.
\end{theorem}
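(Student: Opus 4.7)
The plan is to follow Huisken's original strategy: derive evolution equations for the geometric quantities, preserve convexity, prove an asymptotic umbilicity (``pinching'') estimate via Stampacchia iteration, and rescale at the singular time to obtain smooth convergence to a round sphere. First I would invoke standard quasilinear parabolic theory for the embedding formulation \eqref{eq_mcf2} (passing to a local graph gauge or DeTurck-type reparametrization to remove the tangential degeneracy) to obtain a unique smooth solution on a maximal interval $[0,T)$.

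The next step is to compute the evolution equations for the induced metric, second fundamental form $h_{ij}$, mean curvature $H$, and squared norm $|A|^2 = h_{ij}h^{ij}$:
\[
\partial_t g_{ij} = -2H h_{ij}, \qquad \partial_t H = \Delta_{M_t} H + |A|^2 H,
\]
\[
\partial_t h_{ij} = \Delta h_{ij} - 2H h_{ik} h^k{}_j + |A|^2 h_{ij}, \qquad \partial_t |A|^2 = \Delta |A|^2 - 2|\nabla A|^2 + 2|A|^4.
\]
Since $M_0$ is closed and convex, one has $H \geq H_{\min}(0) > 0$ and $|A|^2 \geq H^2/n$, so the scalar maximum principle applied to $H$ at its spatial minimum yields $\tfrac{d}{dt} H_{\min} \geq n^{-1} H_{\min}^3$, forcing a singularity at some finite $T<\infty$. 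To preserve convexity I would use Hamilton's tensor maximum principle on $h_{ij}$: the reaction term $-2H h_{ik}h^k{}_j + |A|^2 h_{ij}$ preserves each cone $\{h_{ij} \geq \varepsilon H g_{ij}\}$, so quantitative strict convexity $h_{ij} \geq \varepsilon H g_{ij}$ persists throughout $[0,T)$ for some $\varepsilon>0$ depending on $M_0$.

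The technical heart of the argument, and the step I expect to be the main obstacle, is the pinching estimate: there exist $\sigma \in (0,1)$ and $C<\infty$ depending only on $M_0$ such that
\[
|A|^2 - \tfrac{1}{n}H^2 \leq C\, H^{2-\sigma}.
\]
One proves this by setting $f_\sigma = (|A|^2 - H^2/n)/H^{2-\sigma}$, computing its parabolic evolution, and running a Stampacchia/De Giorgi iteration on $\int (f_\sigma - k)_+^p \, d\mu_{M_t}$ using the Michael--Simon Sobolev inequality on each $M_t$. The bookkeeping is delicate: the reaction terms carry borderline signs, and one must exploit the preserved convexity together with a Kato-type improvement of the form $|\nabla A|^2 \geq \tfrac{3}{n+2}|\nabla H|^2$ (valid under the pinching assumption) to absorb the bad contributions and close the iteration.

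With these ingredients in place, the endgame is parabolic rescaling near the singular time. Setting $\tilde{M}_\tau = (2n(T-t))^{-1/2}(M_t - x_0)$ and $\tau = -\tfrac{1}{2}\log(T-t)$ for an appropriate center $x_0$, the pinching estimate forces the traceless second fundamental form to vanish in $C^0$ on the rescaled flow. Bernstein-type bounds on $|\nabla^m A|$, obtained by applying the maximum principle to the evolution of $|\nabla^m A|^2$ together with lower-order correction terms, upgrade this to $C^\infty$ convergence. Independently, an avoidance argument against suitable enclosing and enclosed shrinking spheres shows that $\diam(M_t) \to 0$ and $M_t$ contracts to a single point, so after rescaling one obtains smooth convergence to the unit round sphere; this is the desired convergence to a round point.
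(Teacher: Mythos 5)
The paper does not prove this theorem; it is stated as classical background with a citation to Huisken's original article \cite{Huisken_convex}, so there is no in-paper proof to compare against. Your outline is a faithful, correctly ordered compression of Huisken's argument: short-time existence via a graph/DeTurck gauge, the evolution equations (which match \eqref{eq_evol} and the standard formulas for $h_{ij}$ and $|A|^2$), preservation of the convexity cone $\{h_{ij}\geq \varepsilon H g_{ij}\}$ by Hamilton's tensor maximum principle, finite-time blow-up from $\tfrac{d}{dt}H_{\min}\geq n^{-1}H_{\min}^3$, the pinching estimate $|A|^2-\tfrac1n H^2\leq C H^{2-\sigma}$ by Stampacchia iteration with the Michael--Simon Sobolev inequality, and the rescaling endgame with derivative bounds and an avoidance argument for the diameter.

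Two small remarks. First, the Kato-type improvement $|\nabla A|^2\geq \tfrac{3}{n+2}|\nabla H|^2$ is not contingent on the pinching assumption: it follows purely from the Codazzi equations via the irreducible decomposition of the fully symmetric $3$-tensor $\nabla_i h_{jk}$, and holds on any hypersurface. What the pinching (and quantitative convexity) is actually needed for in that step is to absorb the borderline reaction terms. Second, Huisken's original endgame renormalizes by fixing total area and proves exponential convergence of the normalized flow; your $(2n(T-t))^{-1/2}$ blow-up rescaling is equivalent but slightly ahistorical --- it requires showing separately that the surfaces shrink to a single point $x_0$ (your sphere-avoidance argument does control $\diam(M_t)\to 0$, and together with the pinching estimate this pins down the center), whereas Huisken's normalization sidesteps the choice of $x_0$. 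Either route closes the proof.
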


The convex case ($\lambda_1\geq 0,\ldots ,\lambda_n\geq 0$) is of course very special.
In more general situations, we encounter the formation of singularities.

\begin{example}[Neckpinch singularity]
If $M_0$ looks like a dumbbell, then the neck pinches off. As blowup limit we get a shrinking cylinder.
\end{example}

\begin{exercise}[Parabolic rescaling]
Let $\{M_t\subset \R^{n+1}\}$ be a mean curvature flow of hypersurfaces, and let $\lambda>0$.
Let  $\{M_{t'}^\lambda\}$ be the family of hypersurfaces obtained by the parabolic rescaling $x'=\lambda x$, $t'=\lambda^2 t$, i.e. let $M_{t'}^\lambda=\lambda M_{\lambda^{-2}t'}$. Show that $\{M_{t'}^\lambda\}$ again solves \eqref{eq_mcf}.
\end{exercise}

The formation of singularities is the most important topic in the study of geometric flows.
Roughly speaking, the main questions are: How do singularities and regions of high curvature look like? Can we continue the flow through singularities? If we continue the flow in a weak way, what is the size and the structure of the singular set? Can we flow through singularities in a controlled way by performing surgeries?

The key to answer the above questions is of course to prove strong enough estimates; this will be the main topic of the following lectures.

We end this first lecture, by summarizing a few basic properties of the mean curvature flow, see e.g. \cite{Ecker_book,Mantegazza_book} for more on that.

First, by standard parabolic theory, given any compact initial hypersurface $M_0\subset \R^{n+1}$ (say smooth and embedded), there exists a unique smooth solution $\{M_t\}_{t\in [0,T)}$ of \eqref{eq_mcf} starting at $M_0$, and defined on a maximal time interval $[0,T)$. The maximal time $T$ is characterized by the property that the curvature blows up, i.e.
$\lim_{t\to T}\max_{M_t}\abs{A}=\infty$.

Second, if $M_t$ and $N_t$ are two compact mean curvature flows, then $\dist(M_t,N_t)$ is nondecreasing in time. In particular, by comparison with spheres, the maximal time $T$ above is indeed finite.

Third, the evolution equation \eqref{eq_mcf} implies evolution equations for the induced metric $g_{ij}$, the area element $d\mu$, the normal vector $\vec{\nu}$, the mean curvature $H$, and the second fundamental form $A$.

\begin{proposition}[Evolution equations for geometric quantities]\label{prop_evol_eq}
If $\{M_t\subset \R^{n+1}\}$ evolves by mean curvature flow, then 
\begin{equation}\label{eq_evol}
\begin{array}{lll}
\partial_t g_{ij}=-2HA_{ij} & \partial_td\mu=-H^2 d\mu &\partial_t\vec{\nu}=-\nabla H\\
\partial_t H=\Lap H+\abs{A}^2 H & \partial_t A^i_j=\Lap A^i_j+\abs{A}^2 A^i_j.&
\end{array}
\end{equation}
\end{proposition}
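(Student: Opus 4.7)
The plan is to parametrize $M_t$ by a smooth family of embeddings $X = X(\cdot,t)$ satisfying $\partial_t X = H\vec{\nu}$, as in \eqref{eq_mcf2}, and to derive each identity by differentiating the relevant defining relation in $t$ and then simplifying with the Gauss and Weingarten formulas on each fixed slice.

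First I would tackle the three ``easy'' identities. Starting from $g_{ij} = \langle \partial_i X, \partial_j X\rangle$, differentiating in $t$, and substituting $\partial_t \partial_i X = \partial_i(H\vec{\nu})$, the orthogonality $\langle \vec{\nu}, \partial_j X\rangle = 0$ together with its spatial derivative $\langle \partial_i \vec{\nu}, \partial_j X\rangle = -A_{ij}$ (a form of Weingarten) collapses everything to $\partial_t g_{ij} = -2HA_{ij}$. The area element then follows from Jacobi's formula $\partial_t \log \det g = g^{ij}\partial_t g_{ij}$ combined with $g^{ij}A_{ij} = H$. For the normal vector, $|\vec{\nu}|^2 = 1$ forces $\partial_t \vec{\nu}$ to be tangential, so it is determined by its inner products with the $\partial_k X$; differentiating $\langle \vec{\nu}, \partial_k X\rangle = 0$ in $t$ and substituting $\partial_t \partial_k X = \partial_k(H\vec{\nu})$ gives $\langle \partial_t \vec{\nu}, \partial_k X\rangle = -\partial_k H$, which after raising an index yields $\partial_t \vec{\nu} = -\nabla H$.

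The evolution of $H$ and $A$ is the main obstacle. My approach is to first compute $\partial_t A_{ij}$ from $A_{ij} = \langle \vec{\nu}, \partial_i\partial_j X\rangle$. The product rule yields two terms. The first, $\langle \partial_t\vec{\nu}, \partial_i\partial_j X\rangle$, combined with $\partial_t \vec{\nu} = -\nabla H$ and the Gauss decomposition $\partial_i\partial_j X = \Gamma^k_{ij}\partial_k X + A_{ij}\vec{\nu}$, contributes $-\Gamma^k_{ij}\partial_k H$. The second, $\langle \vec{\nu}, \partial_i\partial_j(H\vec{\nu})\rangle$, expands into $\partial_i\partial_j H + H\langle \vec{\nu}, \partial_i\partial_j \vec{\nu}\rangle$, where the last inner product equals $-(A^2)_{ij}$ by differentiating $\langle \vec{\nu},\partial_j\vec{\nu}\rangle = 0$ and applying Weingarten. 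Summing assembles the Christoffel contribution with $\partial_i\partial_j H$ into the full covariant Hessian, producing
\begin{equation*}
\partial_t A_{ij} = \nabla_i\nabla_j H - H(A^2)_{ij}.
\end{equation*}

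The key remaining input is Simons' identity for hypersurfaces in $\R^{n+1}$,
\begin{equation*}
\Delta A_{ij} = \nabla_i\nabla_j H + H(A^2)_{ij} - |A|^2 A_{ij},
\end{equation*}
which I would derive by commuting covariant derivatives in $\Delta A_{ij} = g^{kl}\nabla_k\nabla_l A_{ij}$ using the Gauss curvature formula $R_{ijkl} = A_{ik}A_{jl} - A_{il}A_{jk}$ together with the Codazzi equation $\nabla_k A_{ij} = \nabla_i A_{kj}$. Substituting Simons gives $\partial_t A_{ij} = \Delta A_{ij} + |A|^2 A_{ij} - 2H(A^2)_{ij}$. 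Passing to the mixed tensor $A^i_j = g^{ik}A_{kj}$, whose time derivative picks up an additional $(\partial_t g^{ik})A_{kj} = 2H(A^2)^i_j$, cancels the unwanted $H(A^2)$ term and yields $\partial_t A^i_j = \Delta A^i_j + |A|^2 A^i_j$. Finally, tracing $i = j$ produces $\partial_t H = \Delta H + |A|^2 H$. I expect Simons' identity to be the main technical hurdle; the rest amounts to bookkeeping from repeatedly differentiating the embedding $X$.
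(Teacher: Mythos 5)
Your proposal is correct and follows essentially the same approach the paper uses (and implicitly expects for the remaining identities): differentiate the defining relations $g_{ij}=\langle\partial_i X,\partial_j X\rangle$, $A_{ij}=\langle\vec\nu,\partial_i\partial_j X\rangle$ etc.\ in $t$ using $\partial_t X=H\vec\nu$ and simplify with the Gauss and Weingarten formulas; the paper only displays the $\partial_t g_{ij}$ computation and delegates $d\mu$ to an exercise, while you correctly supply the remaining steps, with Simons' identity being the standard and necessary ingredient to convert $\partial_t A_{ij}=\nabla_i\nabla_j H - H(A^2)_{ij}$ into the reaction--diffusion form for $A^i_j$ and, by tracing, for $H$.
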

For example, the evolution of $g_{ij}=\partial_i X\cdot \partial_j X$ is computed via
\begin{equation}
\partial_t g_{ij}=2\partial_i (H\vec{\nu})\cdot \partial_j X=2H\partial_i \vec{\nu}\cdot \partial_j X=-2HA_{ij}.
\end{equation}

\begin{exercise}[Evolution of the area element]
Show that if $G=G(t)$ is a smooth family of invertible matrices, then $\tfrac{d}{dt} \ln\det G=\textrm{tr}_G{\tfrac{d}{dt}G}$. Use this to derive the evolution equation for $d\mu=\sqrt{\det g_{ij}}d^nx$.
\end{exercise}

In particular, if $M_0$ is compact the total area decreases according to
\begin{equation}\label{eq_areamon}
\frac{d}{dt}\textrm{Area}(M_t)=-\int_{M_t} H^2 d\mu.
\end{equation}

Finally, using Proposition \ref{prop_evol_eq} and the maximum principle we obtain:

\begin{proposition}[Preserved curvature conditions]
Let $\{M_t\subset \R^{n+1}\}$ be a mean curvature flow of compact hypersurfaces.
If $H\geq 0$ at $t=0$, then $H\geq 0$ for all $t>0$.
Similarly, the conditions $\lambda_1+\ldots +\lambda_k \geq 0$, and $\lambda_1+\ldots +\lambda_k \geq \beta H$ are also preserved along the flow.
\end{proposition}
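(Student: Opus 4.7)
The first assertion, that $H\geq 0$ is preserved, follows from the scalar parabolic maximum principle applied to the evolution equation $\partial_t H = \Lap H + \abs{A}^2 H$ recorded in Proposition \ref{prop_evol_eq}. On any compact subinterval of $[0,T)$ the coefficient $\abs{A}^2$ is bounded, so one argues in the standard way (comparing $H + \eps e^{Ct}$ with $0$ for $C$ large and letting $\eps \downarrow 0$) to conclude that the nonnegativity at $t=0$ propagates to all later times.

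For the conditions on the eigenvalues of $A$, I would invoke Hamilton's tensor maximum principle. The starting point is the evolution of the Weingarten operator from Proposition \ref{prop_evol_eq},
\begin{equation*}
\partial_t A^i_j = \Lap A^i_j + N(A)^i_j, \qquad N(A) := \abs{A}^2 A,
\end{equation*}
which has the ``heat equation plus reaction'' form to which Hamilton's principle applies. That principle says: a fiberwise closed, convex, parallel subset $K$ of the bundle of symmetric endomorphisms of $TM_t$ is preserved along the flow, provided the pointwise ODE $\dot{A}=N(A)$ preserves $K$.

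In our setting the relevant sets are the closed convex cones
\begin{equation*}
K_k := \{A : \lambda_1(A)+\ldots+\lambda_k(A)\geq 0\}, \qquad K_{k,\beta} := \{A : \lambda_1(A)+\ldots+\lambda_k(A)\geq \beta H\},
\end{equation*}
where $\lambda_1\leq\ldots\leq\lambda_n$ are the ordered eigenvalues. Both are defined purely in terms of the spectrum, hence $O(n)$-invariant, hence parallel in the symmetric endomorphism bundle. The ODE check is the key point, and it is almost immediate here because $N(A)=\abs{A}^2 A$ is a nonnegative scalar multiple of $A$ and therefore fixes eigenvectors while rescaling each eigenvalue by $\abs{A}^2\geq 0$. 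Consequently $\sum_{i=1}^{k}\lambda_i(N(A)) = \abs{A}^2 \sum_{i=1}^{k}\lambda_i(A)$ and likewise $\sum_{i=1}^{k}\lambda_i(N(A))-\beta H(N(A)) = \abs{A}^2\bigl(\sum_{i=1}^{k}\lambda_i(A)-\beta H(A)\bigr)$, so whenever $A\in\partial K_k$ or $A\in\partial K_{k,\beta}$ the vector $N(A)$ is tangent to the cone. Hamilton's principle then gives preservation.

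The only genuine obstacle is verifying the hypotheses of the tensor maximum principle rather than its ODE input; once one grants the principle, the reaction term $N(A)=\abs{A}^2A$ is essentially the easiest possible case because it commutes with $A$ and leaves the spectrum ordered. As a sanity check, taking $k=n$ recovers the scalar statement about $H$, and taking $k=1$ gives the preservation of convexity that underlies Huisken's theorem quoted earlier.
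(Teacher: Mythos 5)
Your proof is correct and fills in exactly what the paper only alludes to (it states the proposition with the remark that it follows from Proposition 1.8 and ``the maximum principle''): the scalar parabolic maximum principle for $H\geq 0$, and Hamilton's tensor maximum principle with the spectral, $O(n)$-invariant, closed convex cones $K_k$ and $K_{k,\beta}$ for the eigenvalue conditions, where the ODE $\dot A=\abs{A}^2A$ trivially preserves any cone since the reaction vector is a nonnegative multiple of $A$ itself. This is the standard argument and matches the paper's intent.
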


\section{Monotonicity formula and local regularity theorem}

In this second lecture, we discuss Huisken's monotonicity formula and the local regularity theorem for the mean curvature flow.

Recall that by equation \eqref{eq_areamon} the total area is monotone under mean curvature flow. However, since $\textrm{Area}(\lambda M)=\lambda^n \textrm{Area}(M)$, this is not that useful when considering blowup sequences with $\lambda\to \infty$. A great advance was made by Huisken, who discovered a scale invariant monotone quantity.
To describe this, let $\M=\{M_t\subset \R^{n+1}\}$ be a smooth mean curvature flow of hypersurfaces, say with at most polynomial volume growth,
let $X_0=(x_0,t_0)$ be a point in space-time, and let
\begin{equation}
 \rho_{X_0}(x,t)=(4\pi(t_0-t))^{-n/2} e^{-\frac{\abs{x-x_0}^2}{4(t_0-t)}}\qquad (t<t_0),
\end{equation}
be the $n$-dimensional backwards heat kernel centered at $X_0$.

\begin{theorem}[Huisken's monotonicity formula \cite{Huisken_monotonicity}]\label{thm_huisken_mon}
\begin{equation}\label{eq_huisken_mon}
 \frac{d}{dt}\int_{M_t} \rho_{X_0} d\mu = -\int_{M_t} \left|\vec{H}-\frac{(x-x_0)^\perp}{2(t-t_0)}\right|^2 \rho_{X_0} d\mu\qquad (t<t_0).
\end{equation}
\end{theorem}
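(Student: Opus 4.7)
I would differentiate under the integral sign. Combining the chain rule with the area element evolution $\partial_t\,d\mu=-H^2\,d\mu$ from Proposition~\ref{prop_evol_eq}, one has
\begin{equation*}
 \frac{d}{dt}\int_{M_t} f\,d\mu = \int_{M_t}\bigl(\partial_t f+\vec H\cdot\nabla f-H^2 f\bigr)\,d\mu
\end{equation*}
for any smooth spacetime function $f$, where $\partial_t f$ is the partial time derivative at a fixed point of $\R^{n+1}$ and $\nabla f$ is the ambient gradient. I then apply this with $f=\rho_{X_0}$.

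\textbf{Direct computation and completing the square.} Setting $\tau=t_0-t$ and $y=x-x_0$, the explicit formula for $\rho_{X_0}$ gives $\nabla\rho_{X_0}=-\tfrac{y}{2\tau}\rho_{X_0}$ and $\partial_t\rho_{X_0}=\bigl(\tfrac{n}{2\tau}-\tfrac{|y|^2}{4\tau^2}\bigr)\rho_{X_0}$. Since $\vec H$ is normal to $M_t$, one has $\vec H\cdot y=\vec H\cdot y^\perp$; splitting $|y|^2=|y^T|^2+|y^\perp|^2$ and completing the square in $\vec H$, the integrand becomes
\begin{equation*}
 \partial_t\rho_{X_0}+\vec H\cdot\nabla\rho_{X_0}-H^2\rho_{X_0} = -\Bigl|\vec H-\tfrac{(x-x_0)^\perp}{2(t-t_0)}\Bigr|^2\rho_{X_0}+\Bigl(\tfrac{n}{2\tau}-\tfrac{|y^T|^2}{4\tau^2}+\tfrac{\vec H\cdot y^\perp}{2\tau}\Bigr)\rho_{X_0}.
\end{equation*}

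\textbf{Identifying the error as an intrinsic Laplacian.} The key observation is that the second summand equals $-\Delta_{M_t}\rho_{X_0}$. To see this, I would use the Gauss-type relation
\begin{equation*}
 \Delta_{M_t} g = \Delta_{\R^{n+1}} g - \Hess\,g(\vec\nu,\vec\nu) + \vec H\cdot\nabla g
\end{equation*}
valid for any smooth ambient function $g$, plug in $g=\rho_{X_0}$, and match term by term with the bracket above. Integrating the resulting identity over $M_t$, the Laplacian contributes zero by the divergence theorem---immediately when $M_t$ is compact, or via a cutoff exhaustion exploiting the polynomial volume growth hypothesis together with the Gaussian decay of $\rho_{X_0}$---which yields the statement. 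The main obstacle is this algebraic identification; conceptually it reflects that the $n$-dimensional heat kernel $\rho_{X_0}$ does not satisfy the $(n+1)$-dimensional backwards heat equation, and $\Delta_{M_t}\rho_{X_0}$ is precisely the correction for the dimension mismatch, so once one anticipates a Laplacian on $M_t$ the computation falls into place.
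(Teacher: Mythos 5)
Your proposal is correct and follows essentially the same route as the paper: both arguments reduce to the pointwise identity $\bigl(\tfrac{d}{dt}+\Lap_{M_t}-H^2\bigr)\rho_{X_0}=-\bigl|\vec H-\tfrac{(x-x_0)^\perp}{2(t-t_0)}\bigr|^2\rho_{X_0}$ followed by integrating away the Laplacian term. The paper organizes the algebra by first writing $\Lap_{M_t}\rho=\operatorname{div}_{M_t}D\rho+\vec H\cdot D\rho$ and then verifying $\partial_t\rho+\operatorname{div}_{M_t}D\rho+\tfrac{|\nabla^\perp\rho|^2}{\rho}=0$, whereas you complete the square first and then recognize the leftover bracket as $-\Lap_{M_t}\rho$ via the tangential Laplacian formula --- a reordering of the same computation.
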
 

Huisken's monotonicity formula \eqref{eq_huisken_mon} can be thought of as weighted version of \eqref{eq_areamon}. A key property is its invariance under  rescaling.

\begin{exercise}[Parabolic rescaling]
Let $x'=\lambda(x-x_0)$, $t'=\lambda^2(t-t_0)$, and consider the rescaled flow $M^\lambda_{t'}=\lambda(M_{t_0+\lambda^{-2}t'}-x_0)$. Prove that
\begin{equation}
 \int_{M_t} \rho_{X_0}(x,t) \, d\mu_t(x) = \int_{M^\lambda_{t'}} \rho_{0}(x',t')\,  d\mu_{t'}(x')\qquad (t'<0).
\end{equation}
\end{exercise}

Moreover, the equality case of \eqref{eq_huisken_mon} exactly characterizes the selfsimilarly shrinking solutions (aka shrinking solitons).

\begin{exercise}[Shrinking solitons]
Let $\{M_t\subset \R^{n+1}\}_{t\in (-\infty,0)}$ be an ancient solution of the mean curvature flow. Prove that
$\vec{H}-\frac{x^\perp}{2t}=0$ for all $t<0$ if and only if $M_t=\sqrt{-t}M_{-1}$ for all $t<0$.
\end{exercise}

\begin{proof}[Proof of Theorem \ref{thm_huisken_mon}]
Wlog $X_0=(0,0)$. The proof essentially amounts to deriving belows pointwise identity \eqref{eq_pointwise} for $\rho=\rho_0$.

Since the tangential gradient of $\rho$ is given by $\nabla^{M_t}\rho = D\rho-(D\rho\cdot\vec{\nu}) \vec{\nu}$,
the intrinsic Laplacian of $\rho$ can be expressed as
\begin{equation}
\Lap_{M_t}\rho = \textrm{div}_{M_t}\nabla^{M_t}\rho
=\textrm{div}_{M_t}D\rho+\vec{H}\cdot D\rho.
\end{equation}
Observing also that $\tfrac{d}{dt}\rho=\partial_t \rho+\vec{H}\cdot D\rho$, we compute
\begin{align}
(\tfrac{d}{dt}+\Lap_{M_t})\rho&=\partial_t \rho+\textrm{div}_{M_t}D\rho+2\vec{H}\cdot D\rho\nonumber\\
&=\partial_t \rho+\textrm{div}_{M_t}D\rho+\frac{\abs{\nabla^\perp \rho}^2}{\rho}-\abs{\vec{H}-\frac{\nabla^\perp \rho}{\rho}}^2 \rho+H^2\rho.
\end{align}
We can now easily check that $\partial_t \rho+\textrm{div}_{M_t}D\rho+\frac{\abs{\nabla^\perp \rho}^2}{\rho}=0$. Thus 
\begin{equation}\label{eq_pointwise}
(\tfrac{d}{dt}+\Lap_{M_t}-H^2)\rho=-\abs{\vec{H}-\frac{x^\perp}{2t}}^2 \rho.
\end{equation}
Using also the evolution equation $\tfrac{d}{dt}d\mu =-H^2d\mu $, we conclude that
\begin{equation}
 \frac{d}{dt}\int_{M_t} \rho\, d\mu = -\int_{M_t} \left|\vec{H}-\frac{x^\perp}{2t}\right|^2 \rho\,  d\mu  \qquad (t<0).
\end{equation}
This proves the theorem.
\end{proof}

\begin{remark}[Local version \cite{Ecker_book}]
If $M_t$ is only defined locally, say in $B(x_0,\sqrt{4n}\rho)\times (t_0-\rho^2,t_0)$, then we can use the cutoff function
$\varphi^\rho_{X_0}(x,t)=(1-\tfrac{\abs{x-x_0}^2+2n(t-t_0)}{\rho^2})_+^3$.
Since $(\tfrac{d}{dt}-\Lap_{M_t})\varphi^\rho_{X_0}\leq 0$ we still get the monotonicity inequality
\begin{equation}\label{app_loc_mon}
 \frac{d}{dt}\int_{M_t} \rho_{X_0}\varphi^\rho_{X_0} d\mu \leq -\int_{M_t} \left|\vec{H}-\frac{(x-x_0)^\perp}{2(t-t_0)}\right|^2 \rho_{X_0}\varphi^\rho_{X_0} d\mu.
\end{equation}
\end{remark}

The monotone quantity appearing on the left hand side,
\begin{equation}
\Theta^\rho(\M,X_0,r)=\int_{M_{t_0-r^2}} \rho_{X_0}\varphi_{X_0}^\rho d\mu,
\end{equation}
is called the Gaussian density ratio. Note that $\Theta^\infty(\M,X_0,r)\equiv 1$ for all $r>0$ if and only if $\M$ is a multiplicity one plane containing $X_0$.

We will now discuss the local regularity theorem for the mean curvature flow, which gives definite curvature bounds in a neighborhood of definite size, provided the Gaussian density ratio is close to one.

Since time scales like distance squared, the natural neighborhoods to consider are parabolic balls $P(x_0,t_0,r)=B(x_0,r)\times (t_0-r^2,t_0]$.

\begin{theorem}[Local regularity theorem \cite{brakke,white_regularity}]\label{app_thm_easy_brakke}
There exist universal constants $\eps>0$ and $C<\infty$ with the following property.
If $\M$ is a smooth mean curvature flow in a parabolic ball $P(X_0,4n\rho)$ with
\begin{equation}
 \sup_{X\in P(X_0,r)}\Theta^{\rho}(\M,X,r)<1+\eps
\end{equation}
for some $r\in(0,\rho)$, then
\begin{equation}
 \sup_{P(X_0,r/2)}\abs{A}\leq {C}r^{-1}.
\end{equation}
\end{theorem}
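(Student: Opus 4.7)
The plan is a blow-up argument by contradiction. Suppose no constants $\eps,C$ work. Then for each $i$ there is a smooth flow $\M^i$ in $P(X_0^i,4n\rho_i)$ and a scale $r_i\in(0,\rho_i)$ with
\begin{equation*}
\sup_{X\in P(X_0^i,r_i)}\Theta^{\rho_i}(\M^i,X,r_i)<1+\tfrac{1}{i},\qquad \sup_{P(X_0^i,r_i/2)}\abs{A}>i\cdot r_i^{-1}.
\end{equation*}
After translating $X_0^i$ to the origin and parabolically rescaling by $r_i^{-1}$, and using the scale invariance of Gaussian density ratios together with the local monotonicity \eqref{app_loc_mon}, we may assume $r_i=1$, $X_0^i=(0,0)$, and $\rho_i\geq 1$.

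Next I would run a Hamilton-style \emph{point-picking}. Because $\abs{A}>i$ somewhere in $P(0,1/2)$, an iterative selection produces a spacetime point $Y_i$ with $Q_i:=\abs{A}(Y_i)\to\infty$ and
\begin{equation*}
\abs{A}\leq 2Q_i\quad\text{on}\quad P(Y_i,L_iQ_i^{-1}),
\end{equation*}
for some $L_i\to\infty$. Parabolically rescaling by $Q_i$ about $Y_i$ then yields smooth flows $\tilde\M^i$ satisfying $\abs{A}(0,0)=1$ and $\abs{A}\leq 2$ on $P(0,L_i)$, still defined in a parabolic ball of radius $\geq 4nQ_i\to\infty$. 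By scale invariance of $\Theta^\rho$, the density bound persists: at every point and every scale $s$ inside an exhausting subset of spacetime, $\Theta^{Q_i\rho_i}(\tilde\M^i,\cdot,s)<1+1/i$.

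I would then pass to a smooth subsequential limit. The uniform curvature bound $\abs{A}\leq 2$, combined with standard parabolic interior estimates for the evolution of $A$ given by Proposition \ref{prop_evol_eq}, supplies uniform $C^k$ bounds on every compact subset, while the density hypothesis yields uniform local area bounds. Arzel\`a-Ascoli then produces a smooth eternal mean curvature flow $\M^\infty$ on all of space-time with $\abs{A}(0,0)=1$. Since the cutoff $\varphi^{Q_i\rho_i}_X$ converges to $1$ locally uniformly as $Q_i\rho_i\to\infty$, passing to the limit in \eqref{app_loc_mon} delivers $\Theta^\infty(\M^\infty,X,s)\leq 1$ for \emph{every} space-time point $X$ and scale $s>0$.

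Finally I invoke the rigidity in Huisken's monotonicity formula. On any smooth flow, $s\mapsto\Theta^\infty(\M^\infty,X,s)$ is nondecreasing by Theorem \ref{thm_huisken_mon} and tends to $1$ as $s\downarrow 0$ (the surface is locally nearly a flat plane), so $\Theta^\infty\equiv 1$ identically on the limit. The equality case of \eqref{eq_huisken_mon} then forces $\vec H=\tfrac{(x-x_0)^\perp}{2(t-t_0)}$ around every basepoint $X_0$; combined with the shrinking-soliton exercise and $\Theta^\infty\equiv 1$, this forces $\M^\infty$ to be a static multiplicity-one hyperplane through the origin. But a plane has $\abs{A}\equiv 0$, contradicting $\abs{A}(0,0)=1$.

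The \emph{main obstacle} is the smooth compactness step: one must carry the \emph{local} density hypothesis through the parabolic rescaling in a way that produces genuinely \emph{global} bounds $\Theta^\infty\leq 1$ in the limit, which requires both the scale invariance of the Gaussian density and the fact that the cutoff weight $\varphi^\rho$ becomes trivial as $\rho\to\infty$, together with polynomial local area bounds to justify convergence and ensure the limit is a genuine (non-empty, multiplicity-one) smooth flow rather than a Brakke limit with collapsed sheets.
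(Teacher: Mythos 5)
Your proof is correct and follows essentially the same approach as the paper: contradiction, Hamilton-style point selection, parabolic rescaling by $Q_j=\abs{A}(Y_j)$, smooth compactness to extract a nonflat global limit with $\abs{A}(0,0)=1$, and the rigidity case of Huisken's monotonicity formula to force the limit to be a static multiplicity-one plane. You spell out some steps that the paper treats tersely (tracking the cutoff scale $Q_j\rho_j\to\infty$, the $C^k$ bounds needed for Arzel\`a--Ascoli, and the $s\downarrow 0$ normalization that upgrades $\Theta\leq 1$ to $\Theta\equiv 1$), but the skeleton is identical.
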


\begin{remark}
If $\Theta<1+\frac{\eps}{2}$ holds at some point and some scale, then $\Theta<1+\eps$ holds at all nearby points and somewhat smaller scales.
\end{remark}

\begin{proof}[Proof of Theorem \ref{app_thm_easy_brakke}]
 Suppose the assertion fails. Then there exist a sequence of smooth flows $\M^j$ in $P(0,4n \rho_j)$, for some $\rho_j> 1$, with
\begin{equation}
 \sup_{X\in P(0,1)}\Theta^{\rho_j}(\M^j,X,1)<1+j^{-1},
\end{equation}
but such that there are points $X_j\in P(0,1/2)$ with $\abs{A}(X_j)> j$.

By point selection, we can find $Y_j\in P(0,3/4)$ with $Q_j=\abs{A}(Y_j)> j$ such that
\begin{equation}\label{app_brakke_point_sel}
 \sup_{P(Y_j,j/10Q_j)}\abs{A}\leq 2 Q_j.
\end{equation}
Let us explain how the point selection works: Fix $j$. If $Y^0_j=X_j$ already satisfies (\ref{app_brakke_point_sel}) with $Q^0_j=\abs{A}(Y^0_j)$, we are done. Otherwise, there is a point $Y^1_j\in P(Y_j^0,j/10Q^0_j)$ with $Q^1_j=\abs{A}(Y^1_j)>2Q^0_j$.
If $Y^1_j$ satisfies (\ref{app_brakke_point_sel}), we are done. Otherwise, there is a point $Y^2_j\in P(Y_j^1,j/10Q^1_j)$ with $Q^2_j=\abs{A}(Y^2_j)>2Q^1_j$, etc.
Note that $\frac{1}{2}+\frac{j}{10Q_j^0}(1+\frac{1}{2}+\frac{1}{4}+\ldots)<\frac{3}{4}$. By smoothness, the iteration terminates after a finite number of steps, and the last point of the iteration lies in $P(0,3/4)$ and satisfies (\ref{app_brakke_point_sel}).

Continuing the proof of the theorem, let $\hat\M^j$ be the flows obtained by shifting $Y_j$ to the origin and parabolically rescaling by $Q_j=\abs{A}(Y_j)\to\infty$.
Since the rescaled flow satisfies $\abs{A}(0)=1$ and $\sup_{P(0,j/10)}\abs{A}\leq 2$, we can pass smoothly to a nonflat global limit. On the other hand, by the rigidity case of (\ref{app_loc_mon}), and since
\begin{equation}
 \Theta^{\hat\rho_j}(\hat\M^j,0,Q_j)<1+j^{-1},
\end{equation}
where $\hat\rho_j=Q_j\rho_j\to\infty$, the limit is a flat plane; a contradiction.
\end{proof}

\section{Noncollapsing for mean convex mean curvature flow}

In this lecture, we discuss the noncollapsing result of Andrews. 

\begin{openproblem}[Multiplicity one question]
 Can a mean curvature flow of embedded hypersurfaces ever develop a singularity which has a higher multiplicity plane as a blowup limit?
\end{openproblem}

This is a great open problem, going back to the work of Brakke \cite{brakke}. For example, one could imagine a flow that looks more and more like two planes connected by smaller and smaller catenoidal necks.

Establishing multiplicity one is highly relevant. E.g. if the flow is weakly close to a multiplicity one plane, then the local regularity theorem (Theorem  \ref{app_thm_easy_brakke}) gives definite curvature bounds. However, if the flow is weakly close to a higher multiplicity plane, then -- as illustrated by the above example -- the curvature could be unbounded.

White proved (via clever and sophisticated arguments for Brakke flows) that blowup limits of higher multiplicity can never occur in the mean convex case \cite{white_size}, i.e. when the mean curvature is positive.
More recently, Andrews found a short quantitative argument \cite{andrews1}.

\begin{defn}[\cite{andrews1,HK1}]\label{def_andrews_static}
A closed embedded mean convex hypersurfaces $M^n\subset \RR^{n+1}$ satisfies the \emph{$\alpha$-Andrews condition},
if each $p\in M$ admits interior and exterior balls tangent at $p$ of radius $\frac{\alpha}{H(p)}$.
\end{defn}

By compactness, every closed embedded mean convex initial surface $M_0$ satisfies the Andrews condition for some $\alpha>0$. The main result of Andrews says that this is preserved under the flow.

\begin{thm}[Andrews \cite{andrews1}]\label{thm_andrews}
 If the initial surface $M_0$ satisfies the $\alpha$-Andrews condition, then so does $M_t$ for all $t\in [0,T)$.
\end{thm}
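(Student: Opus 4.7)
The plan is to recast the $\alpha$-Andrews condition as a pointwise inequality on pairs of points of $M_t$ and then run the maximum principle on this two-point function, following Andrews' scheme.

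\emph{Step 1 (two-point reformulation).} The interior ball of radius $r$ tangent to $M_t$ at $p$ has center $p - r\vec{\nu}(p)$, so it fails to meet $M_t\setminus\{p\}$ iff $|q-p|^2 \ge 2r\langle p-q, \vec{\nu}(p)\rangle$ for every $q\in M_t$. Introducing the two-point function
\[
Z(p,q,t) \;=\; \frac{2\langle p-q, \vec{\nu}(p)\rangle}{|p-q|^2}\qquad (p\neq q),
\]
the $\alpha$-Andrews interior ball condition at $p$ is equivalent to $Z(p,q,t) \le H(p)/\alpha$ for all $q\in M_t\setminus\{p\}$. The exterior condition is analogous with $\vec{\nu}$ replaced by $-\vec{\nu}$, and its treatment is symmetric, so I focus on the interior one.

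\emph{Step 2 (contradiction setup and the infinitesimal limit).} Assume the interior condition is violated and, after a small $\eps$-perturbation of $\alpha$ if necessary, let $t_0\in(0,T)$ be a first time of violation and $(p_0,q_0)$ a pair realizing the equality $Z(p_0,q_0,t_0)=H(p_0)/\alpha$. A priori $q_0$ could coincide with $p_0$; writing $M_{t_0}$ as a graph over $T_{p_0}M_{t_0}$ near $p_0$ and Taylor expanding shows $Z(p_0,q,t_0)\to A(e,e)$ as $q\to p_0$ along a unit tangent $e$, so $\limsup_{q\to p_0} Z \le \lambda_n(p_0)$. In this infinitesimal case the equality reduces to the tensor inequality $H-\alpha\lambda_n\ge 0$. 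The cone $\{A : H \ge \alpha\lambda_n\}$ is fiberwise convex (since $\lambda_n$ is a convex function of a symmetric matrix and $H$ is linear) and preserved by the ODE $\dot A = |A|^2 A$, so by Hamilton's tensor maximum principle applied to the evolution $\partial_t A^i_j = \Delta A^i_j + |A|^2 A^i_j$ from Proposition \ref{prop_evol_eq}, this inequality is preserved. Hence a first violation cannot be infinitesimal, and one may assume $p_0\ne q_0$.

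\emph{Step 3 (genuine two-point maximum principle).} Under $p_0\ne q_0$, the pair is an interior maximum of $(p,q)\mapsto Z(p,q,t_0) - H(p)/\alpha$ on $M_{t_0}\times M_{t_0}$, so the tangent derivatives in $p$ and in $q$ separately vanish and the tangent Hessian is nonpositive. The first-order condition in $q$ constrains $\vec{\nu}(q_0)$ in terms of the chord $p_0-q_0$ and $\vec{\nu}(p_0)$, while the condition in $p$ relates $\nabla^{M_{t_0}} H(p_0)/\alpha$ to the tangential gradient of $Z(\cdot,q_0,t_0)$ at $p_0$. Using these identities together with $\partial_t x=\vec H$, $\partial_t \vec{\nu}=-\nabla H$, and $(\partial_t-\Delta)H = |A|^2 H$ from Proposition \ref{prop_evol_eq} to compute $\partial_t Z - \Delta_p Z - \Delta_q Z$ at $(p_0,q_0,t_0)$, the goal is to establish
\[
(\partial_t - \Delta_{M_{t_0}\times M_{t_0}})\!\left(Z - \tfrac{H(p)}{\alpha}\right)\bigg|_{(p_0,q_0)} \;\le\; 0,
\]
contradicting that $Z-H/\alpha$ is about to become strictly positive.

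\emph{Main obstacle.} The hard part is the two-point computation in Step 3: one must carefully differentiate $Z$ in space and in time, substitute the critical-point identities obtained from the vanishing of the first-order derivatives, and observe that the surviving second-order and $|A|^2$ terms combine to have a nonpositive sign -- this works precisely because the coefficient of $H$ in the Andrews ratio is the right one. Steps 1 and 2 are essentially bookkeeping.
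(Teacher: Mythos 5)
Your reformulation in Step 1 and the overall strategy (two-point function plus maximum principle) is exactly the Andrews--Haslhofer--Kleiner route taken in the paper, so at the level of strategy you are on track. Step 2 is sound but a diversion: the paper packages everything through the viscosity-solution statement for $Z^\ast, Z_\ast$ (Theorem \ref{subandsupersolution}), which automatically handles the $q\to p$ degeneration without a separate Hamilton-tensor argument. Also note that the paper compares $Z$ to $H$ via the \emph{ratio} $Z/H$ rather than the difference $Z-H/\alpha$; at the extremal point the two devices agree, but the ratio lets the zeroth-order $|A|^2$ terms cancel identically and is what makes the reduction to Theorem \ref{subandsupersolution} clean.

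The genuine gap is in Step 3. You plan to contract the spatial Hessian of $Z-H/\alpha$ on $M\times M$ against the identity, i.e.\ to prove $(\partial_t - \Delta_p - \Delta_q)(Z-H/\alpha)\le 0$ at the extremal pair. This is not the right elliptic operator and the sign does not come out. If you actually assemble $\partial_t Z - \Delta_x Z - \Delta_y Z$ from the formulas at a critical pair, the $H(y)$-terms and the $Z^2H$-terms cancel, the $|A|^2$-terms cancel because $Z=H/\alpha$ there, but you are left with
\[
\frac{4}{d^2}\bigl(nZ - H(x)\bigr) \;-\; \frac{4}{\alpha d^2}\langle \omega,\nabla H(x)\rangle
= \frac{4}{d^2}\,\frac{(n-\alpha)}{\alpha}\,H(x)\;-\;\frac{4}{\alpha d^2}\langle \omega,\nabla H(x)\rangle,
\]
which has no definite sign in general (indeed for $\alpha<n$ the first piece is \emph{positive}). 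The correct operator is the diagonal contraction $\sum_i\bigl(\partial_{x^i}\partial_{x^i}+2\,\partial_{x^i}\partial_{y^i}+\partial_{y^i}\partial_{y^i}\bigr)$ with the coordinate frames at $p_0$ and $q_0$ matched (reflected through the chord); it is the mixed term $2\partial_{x^i}\partial_{y^i}Z$ that produces the factorized expression
\[
\frac{4}{d^2}\bigl(h_{ip}(x)-Z\delta_{ip}\bigr)\Bigl(\delta_{ip}-\langle \partial_{y^i},\partial_{x^p}\rangle+\tfrac{2}{d^2}\langle\omega,\partial_{x^p}\rangle\langle\omega,\partial_{y^i}-\partial_{x^i}\rangle\Bigr),
\]
whose two factors have opposite semidefinite signs (one from the extremality of $Z$, the other from the elementary geometric lemma of Andrews--Langford--McCoy). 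Without the cross terms the argument does not close. In short: your "Main obstacle" paragraph is correct that Step 3 is the crux, but the specific plan you wrote down there would fail; you must contract against a degenerate elliptic operator with the mixed $\partial_x\partial_y$ block, not the product Laplacian.
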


\begin{remark}
Theorem \ref{thm_andrews} immediately rules out higher multiplicity planes as potential blowup limits for mean convex mean curvature flow. It also rules out other collapsed solutions, e.g. (grim reaper) $\times$ $\R^{n-1}$.
\end{remark}

We will now describe the proof of Theorem \ref{thm_andrews}.
The first step is to express the geometric condition on the interior and exterior balls in terms of certain inequalities.
Let us first consider interior balls. For $x\in M$, the interior ball of radius $r(x)=\frac{\alpha}{H(x)}$ has the center point $c(x)=x+r(x)\nu(x)$.
The condition that this is indeed an interior ball is equivalent to the inequality
\begin{equation}\label{inequ_ymincx}
 \norm{y-c(x)}^2 \geq r(x)^2 \qquad \textrm{for all $y\in M$}.
\end{equation}
Observing $\norm{y-c(x)}^2=\norm{y-x}^2-2r(x)\langle y-x,\nu(x)\rangle +r(x)^2$ and inserting $r(x)=\frac{\alpha}{H(x)}$ the inequality (\ref{inequ_ymincx}) can be rewritten as
\begin{equation}
 \frac{2\langle y-x,\nu(x)\rangle}{\norm{y-x}^2} \leq \frac{H(x)}{\alpha} \qquad \textrm{for all $y\in M$}.
\end{equation}
Now given a mean convex flow $M_t=X(M,t)$ of closed embedded hypersurfaces, we consider the quantity
\begin{equation}\label{def_overlZ}
 Z^\ast(x,t)=\sup_{y\neq x} \frac{2\langle X(y,t)-X(x,t),\nu(x,t)\rangle}{\norm{X(y,t)-X(x,t)}^2}.
\end{equation}
Proving interior noncollapsing amounts to showing that if 
\begin{equation}\label{eq_intnoncoll}
Z^\ast(x,t)\leq \frac{H(x,t)}{\alpha}
\end{equation}
holds at $t=0$, then this holds for all $t$.
Similarly, exterior noncollapsing amounts to proving the inequality
\begin{equation}\label{eq_extnoncoll}
 Z_\ast(x,t)=\inf_{y\neq x} \frac{2\langle X(y,t)-X(x,t),\nu(x,t)\rangle}{\norm{X(y,t)-X(x,t)}^2}\geq -\frac{H(x,t)}{\alpha}.
\end{equation}

That the inequalities (\ref{eq_intnoncoll}) and (\ref{eq_extnoncoll}) are indeed preserved under mean curvature flow is a quick consequence of the following theorem.

\begin{thm}[Andrews-Langford-McCoy \cite{Andrews_Langford_McCoy}]\label{subandsupersolution}
Let $M_t$ be a mean curvature flow of closed embedded mean convex hypersurfaces, and define $Z_\ast$ and $Z^\ast$ as in (\ref{def_overlZ}) and (\ref{eq_extnoncoll}). Then
\begin{equation}\label{evol_ineq}
 \partial_t Z_\ast\geq \Lap Z_\ast+\abs{A}^2Z_\ast \qquad\qquad \partial_t Z^\ast\leq \Lap Z^\ast+\abs{A}^2Z^\ast
\end{equation}
in the viscosity sense.
\end{thm}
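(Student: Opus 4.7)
The plan is to reduce each inequality to a two-point maximum principle calculation. I focus on the subsolution statement for $Z^\ast$; the supersolution statement for $Z_\ast$ follows by the symmetric argument with the opposite choice of unit normal (equivalently, by replacing $Z$ with $-Z$). Introduce the smooth two-point function
$$Z(p,q,t)=\frac{2\langle X(q,t)-X(p,t),\nu(p,t)\rangle}{\|X(q,t)-X(p,t)\|^2}$$
on $\{(p,q)\in M\times M:p\neq q\}$, so that $Z^\ast(p,t)=\sup_{q\neq p}Z(p,q,t)$. By compactness of $M$ the sup is attained either (i) at an interior point $q_0\neq p_0$ or (ii) in the limit $q\to p_0$ along some unit $v\in T_{p_0}M$, in which case $Z\to A_{p_0,t_0}(v,v)$ and the sup equals $\lambda_{\max}(p_0,t_0)$.

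For the interior case, let $\phi\in C^2$ touch $Z^\ast$ from above at $(p_0,t_0)$, with $q_0$ attaining the sup there. Then $\psi(p,q,t)=\phi(p,t)-Z(p,q,t)$ has a spacetime local minimum equal to zero at $(p_0,q_0,t_0)$, since $\phi(p,t)\geq Z^\ast(p,t)\geq Z(p,q,t)$ for every nearby $q\neq p$ with equality at the touching point and at $q_0$. Reading off first- and second-order information at $(p_0,q_0,t_0)$ gives $\partial_t\phi=\partial_t Z$, $\nabla_p\phi=\nabla_p Z$, $\nabla_q Z=0$, $\Delta_{M_t,p}\phi\geq \Delta_{M_t,p}Z$, and $\Delta_{M_t,q}Z\leq 0$. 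Substituting, the viscosity subsolution condition reduces to the two-point evolution inequality
$$\bigl(\partial_t-\Delta_{M_t,p}-\Delta_{M_t,q}-|A|^2(p_0,t_0)\bigr)\,Z(p_0,q_0,t_0)\leq 0$$
at every interior critical configuration.

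I would then establish this inequality by direct computation using the evolution equations in Proposition \ref{prop_evol_eq}. Setting $w=X(q,t)-X(p,t)$ and choosing orthonormal frames at $p_0$ and $q_0$, I differentiate $Z=2\langle w,\nu_p\rangle/|w|^2$ in $p$, $q$, and $t$, substituting $\partial_t X=\vec H$, $\partial_t\nu=-\nabla H$, and the Weingarten-type identities expressing tangential derivatives of $\nu$ and the frame. The first-order critical conditions $\nabla_q Z=0$ and $\nabla_p\psi=0$ are then used to eliminate the $\nabla H(p_0)$-contribution and the troublesome mixed $p$-$q$ cross terms. After these cancellations the remaining expression collapses algebraically to $|A|^2(p_0,t_0)\,Z$ plus a manifestly nonpositive quantity, giving the desired inequality. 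This algebraic collapse is essentially the computation carried out by Andrews in \cite{andrews1} and refined in \cite{Andrews_Langford_McCoy}.

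It remains to treat the diagonal case, when $Z^\ast(p_0,t_0)=\lambda_{\max}(p_0,t_0)$. Here I would use that the largest principal curvature is itself a viscosity subsolution of $\partial_t u=\Delta u+|A|^2 u$: this follows from the pointwise evolution $\partial_t A^i_j=\Delta A^i_j+|A|^2 A^i_j$ of Proposition \ref{prop_evol_eq} together with the standard Rayleigh-quotient argument for the top eigenvalue of an evolving symmetric tensor, so any $\phi\geq Z^\ast$ touching at $(p_0,t_0)$ also touches $\lambda_{\max}$ from above there. The main obstacle will be the interior two-point computation: one must find exactly the right linear combination of the critical-point identities to kill the $\nabla H$- and mixed-derivative terms so that the clean reaction-diffusion structure emerges.
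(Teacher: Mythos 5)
Your general framework --- a two-point maximum principle, a test function $\phi$ touching $Z^\ast$ (resp.\ $Z_\ast$) from above (resp.\ below), the auxiliary function $\psi=\phi-Z$ having a local minimum (resp.\ maximum) at $(p_0,q_0,t_0)$, and the split into an interior critical configuration versus the diagonal case $q\to p$ --- is exactly the one the paper uses, and your remark about the diagonal limit is a legitimate point that the paper glosses over. However, there is a genuine gap in the reduction you write down, and you yourself flag it at the end.

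The issue is the second-order information you extract from the local minimum. You use only the pure-slot conditions $\Delta_{M_t,p}\phi\geq\Delta_{M_t,p}Z$ and $\Delta_{M_t,q}Z\leq 0$, and conclude that it suffices to prove
$$\bigl(\partial_t-\Delta_{M_t,p}-\Delta_{M_t,q}-|A|^2\bigr)Z\leq 0$$
at the critical configuration. But this inequality is \emph{false} in general: tracing the formulas for $\partial_{x^i}\partial_{x^j}Z$, $\partial_{y^i}\partial_{y^j}Z$, and $\partial_t Z$ at a critical point, the $\nabla H$-terms do cancel, but one is left with a residual of the form $-\tfrac{4}{d^2}(H(x)-nZ)-\tfrac{4}{d^2}\langle\omega,\partial_{x^i}\rangle\partial_{x^i}Z$, and since $Z^\ast\geq\lambda_{\max}(x)$ the first term satisfies $H(x)-nZ^\ast\leq 0$, i.e.\ it has the \emph{wrong} sign. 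The mixed Hessian $\sum_i\partial_{x^i}\partial_{y^i}Z$ cannot be dropped; it is not eliminated by the first-order conditions but is precisely what repairs the sign. The paper instead uses the second-order condition coming from the \emph{diagonal} directions $\partial_{x^i}+\partial_{y^i}$ (after suitably aligning normal frames at $x$ and $y$), i.e.
$$0\leq -\partial_t(Z-\phi)+\sum_i\bigl(\partial_{x^i}\partial_{x^i}+2\partial_{x^i}\partial_{y^i}+\partial_{y^i}\partial_{y^i}\bigr)(Z-\phi).$$
With the cross term $2\sum_i\partial_{x^i}\partial_{y^i}Z$ included, the whole residual factors as
$$\tfrac{4}{d^2}\bigl(h_{ip}(x)-Z\delta_{ip}\bigr)\Bigl(\delta_{ip}-\langle\partial_{y^i},\partial_{x^p}\rangle+\tfrac{2}{d^2}\langle\omega,\partial_{x^p}\rangle\langle\omega,\partial_{y^i}-\partial_{x^i}\rangle\Bigr),$$
and the sign then follows from two facts you would still need: $h_{ip}(x)-Z\delta_{ip}$ is semi-definite with the right sign by the definition of $Z_\ast$ (resp.\ $Z^\ast$), and the bracketed factor is nonpositive by an elementary geometric lemma about the two tangent frames and the chord $\omega$ (Lemma~6 in Andrews--Langford--McCoy). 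So the ``right linear combination'' you say you still need to find is the diagonal one, and you also need the geometric inequality controlling $\delta_{ip}-\langle\partial_{y^i},\partial_{x^p}\rangle+\tfrac{2}{d^2}\langle\omega,\partial_{x^p}\rangle\langle\omega,\partial_{y^i}-\partial_{x^i}\rangle$, which your sketch doesn't mention.
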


\begin{proof}[Proof of Theorem \ref{thm_andrews}  (using Theorem \ref{subandsupersolution})]
We start by computing
\begin{equation}
(\partial_t-\Lap) \frac{Z_\ast}{H} = \frac{(\partial_t-\Lap)\zu}{H}-\frac{\zu (\partial_t-\Lap)H}{H^2}+ 2 \langle \Grad \log H, \Grad \frac{Z_\ast}{H} \rangle.
\end{equation}
Thus, using Proposition \ref{prop_evol_eq} and Theorem \ref{subandsupersolution}, we obtain
\begin{equation}
\partial_t \frac{Z_\ast}{H}
\geq \Lap \frac{Z_\ast}{H} + 2 \langle \Grad \log H, \Grad \frac{Z_\ast}{H} \rangle.
\end{equation}
By the maximum principle, the minimum of $\frac{\zu}{H}$ is nondecreasing in time. In particular, if the inequality $\frac{\zu}{H}\geq -\frac{1}{\alpha}$ holds at $t=0$, then this inequality holds for all $t$.
Arguing similarly we obtain that 
\begin{equation}
\partial_t \frac{Z^\ast}{H}
\leq \Lap \frac{Z^\ast}{H} + 2 \langle \Grad \log H, \Grad \frac{Z^\ast}{H} \rangle,
\end{equation}
and thus that the inequality $\frac{Z^\ast}{H} \leq \frac{1}{\alpha}$ is also preserved along the flow.
\end{proof}

It remains to describe the proof of Theorem \ref{subandsupersolution}. This essentially amounts to computing various derivatives of
\begin{equation}
Z(x,y,t) = \frac{2 \langle X(y,t)-X(x,t), \nu(x,t) \rangle}{{\norm{X(y,t)-X(x,t)}}^2}.
\end{equation}
To facilitate the computation, we write $d(x,y,t)=\norm{X(y,t)-X(x,t)}$, $\omega(x,y,t) =X(y,t)-X(x,t)$, $\partial_{x^i} = \frac{\partial X(x,t)}{\partial x^i}$ and $\partial_{y^j} = \frac{\partial X(y,t)}{\partial y^j}$,  and always work in normal coordinates at $x$ and $y$, in particular we have
\begin{equation}\label{sec_fund_eqn}
\tfrac{\partial}{\partial {x^i}} \partial_{x^j} =  h_{ij}(x) \nu(x),\qquad \tfrac{\partial}{\partial {x^i}}  \nu(x)= -h_{ip}(x) \partial_{x^p}.
\end{equation}

\begin{lemma} The first derivative of $Z$ with respect to $x^i$ is given by
\begin{equation}\label{Z_first_der_x}
\frac{\partial Z}{\partial {x^i}}
= \frac{2}{d^2}\left( Z \langle \omega,\partial_{x^i} \rangle -h_{ip}(x) \langle \omega,  \partial_{x^p} \rangle  \right).
\end{equation}
\end{lemma}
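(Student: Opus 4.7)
The plan is to compute $\partial Z/\partial x^i$ directly from the definition $Z = 2\langle\omega,\nu(x)\rangle/d^2$, where $\omega = X(y,t)-X(x,t)$ and $d^2 = \|\omega\|^2$, treating $y$ and $t$ as fixed parameters and using only the two ingredients supplied in (\ref{sec_fund_eqn}) together with the fact that $\langle \partial_{x^i},\nu(x)\rangle = 0$. The quotient rule gives
\begin{equation}
\frac{\partial Z}{\partial x^i} = \frac{2}{d^2}\,\frac{\partial \langle \omega,\nu(x)\rangle}{\partial x^i} - \frac{2\langle \omega,\nu(x)\rangle}{d^4}\,\frac{\partial d^2}{\partial x^i},
\end{equation}
so the task reduces to evaluating these two derivatives.

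For the numerator, I would use $\partial\omega/\partial x^i = -\partial_{x^i}$ and $\partial \nu(x)/\partial x^i = -h_{ip}(x)\partial_{x^p}$ from (\ref{sec_fund_eqn}). Since $\partial_{x^i}$ is tangent to $M_t$ at $x$, the first term $\langle -\partial_{x^i},\nu(x)\rangle$ vanishes, leaving $\partial \langle \omega,\nu(x)\rangle/\partial x^i = -h_{ip}(x)\langle \omega,\partial_{x^p}\rangle$. For the denominator, $\partial d^2/\partial x^i = 2\langle \omega,\partial\omega/\partial x^i\rangle = -2\langle \omega,\partial_{x^i}\rangle$.

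Substituting and using $2\langle \omega,\nu(x)\rangle/d^2 = Z$ to collect the factor $Z$ in the second term then yields the claimed identity
\begin{equation}
\frac{\partial Z}{\partial x^i} = \frac{2}{d^2}\left( Z\,\langle \omega,\partial_{x^i}\rangle - h_{ip}(x)\langle \omega,\partial_{x^p}\rangle\right).
\end{equation}
There is no genuine obstacle here; the only points requiring care are the sign in $\partial\omega/\partial x^i = -\partial_{x^i}$ (because $\omega$ depends on $x$ through $-X(x,t)$) and the orthogonality $\langle \partial_{x^i},\nu(x)\rangle = 0$, which together explain why no term involving $\langle \partial_{x^i},\nu\rangle$ survives and why the contribution of the Weingarten relation produces exactly the $h_{ip}(x)\langle\omega,\partial_{x^p}\rangle$ term.
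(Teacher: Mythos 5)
Your proposal is correct and follows essentially the same approach as the paper: apply the quotient rule to $Z=2\langle\omega,\nu(x)\rangle/d^2$, use $\partial d^2/\partial x^i=-2\langle\omega,\partial_{x^i}\rangle$, the Weingarten relation for $\partial\nu(x)/\partial x^i$, and the orthogonality $\langle\partial_{x^i},\nu(x)\rangle=0$ to drop the term from differentiating $\omega$ in the numerator. The paper is slightly more compressed (it writes the quotient-rule result already with the $\langle\partial_{x^i},\nu\rangle$ term omitted), but the ingredients and their use are identical.
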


\begin{proof}
Observe that $\tfrac{\partial}{\partial {x^i}} d^2= -2 \langle \omega,\partial_{x^i} \rangle$.
Using this, equation (\ref{sec_fund_eqn}), and the fact that $\langle \partial_{x^i}, \nu(x) \rangle = 0$, we compute
\begin{align*}
\frac{\partial Z}{\partial x^i}
&=  \tfrac{2}{d^2} \langle \omega, \tfrac{\partial}{\partial {x^i}}\nu(x)\rangle - \tfrac{2}{d^4} \langle \omega, \nu(x) \rangle \tfrac{\D}{\partial {x^i}} d^2\\
&= -\tfrac{2}{d^2}h_{ip}(x) \langle \omega,  \partial_{x^p} \rangle + \tfrac{2}{d^2} Z \langle \omega,\partial_{x^i} \rangle.
\end{align*}
This proves the lemma.
\end{proof}

Similarly, the first derivative of $Z$ with respect to $y^i$ is given by
\begin{equation}\label{Z_first_der_y}
\frac{\D Z}{\D {y^i}}=\frac{2}{d^2}\ip{\D_{y^i},\nu(x)-Z\omega}.
\end{equation}

\begin{exercise}[Time derivative]
Show that \begin{equation}
\D_tZ =  -\frac{2}{d^2} \left(H(x) + H(y) + \ip{\omega, \Grad H(x)}\right) + Z^2 H(x).
\end{equation}
\end{exercise}

We also need the formulas for the second spatial derivatives.

\begin{lemma} At a critical point of $Z$ with respect to $y$ we have
\begin{equation}
\frac{\partial^2 Z}{\D {x^i} \D {y^j}}= \frac{2}{d^2}(Z\delta_{ip}-h_{ip}(x))\langle \partial_{y^j}, \partial_{x^p} \rangle - \frac{2}{d^2}  \frac{\D Z}{\partial {x^i}} \langle \partial_{y^j},  \omega \rangle.
\end{equation}
\end{lemma}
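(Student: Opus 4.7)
The plan is to obtain the formula by directly differentiating the first-derivative identity \eqref{Z_first_der_x} with respect to $y^j$, and then using the critical point condition to drop one of the resulting terms. No new geometric input is needed beyond what was already used for the first derivative; only careful bookkeeping.

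First, I would write \eqref{Z_first_der_x} in the form $\frac{\partial Z}{\partial x^i} = \frac{2}{d^2}\, A_i$, where $A_i = Z\langle\omega,\partial_{x^i}\rangle - h_{ip}(x)\langle\omega,\partial_{x^p}\rangle$, and apply the product rule in $y^j$. Since $X(x,t)$, $\nu(x,t)$, and $h_{ip}(x)$ are independent of $y$, and $\omega = X(y,t)-X(x,t)$, one has $\frac{\partial\omega}{\partial y^j} = \partial_{y^j}$, and hence $\frac{\partial d^2}{\partial y^j} = 2\langle\omega,\partial_{y^j}\rangle$. Thus $\frac{\partial}{\partial y^j}\!\left(\frac{2}{d^2}\right) = -\frac{4\langle\omega,\partial_{y^j}\rangle}{d^4}$, so the first piece of the product rule contributes $-\frac{2}{d^2}\langle\partial_{y^j},\omega\rangle \cdot \frac{\partial Z}{\partial x^i}$, which is exactly the second term in the claimed formula.

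For the second piece, I would compute
\begin{equation*}
\frac{\partial A_i}{\partial y^j} = \frac{\partial Z}{\partial y^j}\langle\omega,\partial_{x^i}\rangle + Z\langle\partial_{y^j},\partial_{x^i}\rangle - h_{ip}(x)\langle\partial_{y^j},\partial_{x^p}\rangle.
\end{equation*}
Here the critical-point hypothesis $\frac{\partial Z}{\partial y^j}=0$ kills the first term, and rewriting $Z\langle\partial_{y^j},\partial_{x^i}\rangle$ as $Z\delta_{ip}\langle\partial_{y^j},\partial_{x^p}\rangle$ collects the remaining two terms into $(Z\delta_{ip}-h_{ip}(x))\langle\partial_{y^j},\partial_{x^p}\rangle$, multiplied by the prefactor $\frac{2}{d^2}$. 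Adding the two contributions yields the stated identity.

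I do not expect any real obstacle: the only subtle point is recognizing that the entire factor $\frac{2A_i}{d^2}$ reassembles into $\frac{\partial Z}{\partial x^i}$ in the first piece (giving the compact form $\frac{2}{d^2}\frac{\partial Z}{\partial x^i}\langle\partial_{y^j},\omega\rangle$), and that one should work in normal coordinates at $y$ so that derivatives of $\partial_{y^j}$ in the $y$-direction vanish at the basepoint. The critical-point assumption is used exactly once, and only to eliminate the $\frac{\partial Z}{\partial y^j}$ term; the formulas \eqref{sec_fund_eqn} for the second fundamental form are not needed here because we are differentiating in the $y$-direction quantities depending only on $x$.
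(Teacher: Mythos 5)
Your proof is correct, and it takes a slightly different but equally valid route. The paper differentiates the $y$-derivative formula \eqref{Z_first_der_y}, $\frac{\partial Z}{\partial y^j}=\frac{2}{d^2}\langle\partial_{y^j},\nu(x)-Z\omega\rangle$, with respect to $x^i$: there the critical-point condition kills the term arising from $\partial_{x^i}\bigl(\tfrac{2}{d^2}\bigr)$, since it multiplies $\langle\partial_{y^j},\nu(x)-Z\omega\rangle=\tfrac{d^2}{2}\tfrac{\partial Z}{\partial y^j}=0$, and the remaining term expands via $\partial_{x^i}\nu(x)=-h_{ip}(x)\partial_{x^p}$ and $\partial_{x^i}\omega=-\partial_{x^i}$. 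You instead differentiate the $x$-derivative formula \eqref{Z_first_der_x} with respect to $y^j$, using the critical-point condition to kill the $\tfrac{\partial Z}{\partial y^j}\langle\omega,\partial_{x^i}\rangle$ term, and recognizing that the leftover prefactor recombines into $\tfrac{\partial Z}{\partial x^i}$. Both routes use the critical-point hypothesis exactly once and arrive at the identity in two lines; the paper's order of differentiation is marginally cleaner because $\nu(x)-Z\omega$ packages neatly before any expansion, whereas yours requires reassembling $\tfrac{2A_i}{d^2}$ back into $\tfrac{\partial Z}{\partial x^i}$. Your observation that \eqref{sec_fund_eqn} is not needed in the $y$-differentiation step is correct (though it was of course already used to derive \eqref{Z_first_der_x}); the remark about normal coordinates at $y$ is harmless but unnecessary here, since no second $y$-derivative of a coordinate vector field appears in this mixed derivative.
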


\begin{proof} Differentiating (\ref{Z_first_der_y}) again, and using that we are at a critical point, we compute
\begin{align}
\frac{\partial^2 Z}{\D {x^i} \D {y^j}}
&=  \tfrac{2}{d^2} \langle \partial_{y^j}, \partial_{x^i} (\nu(x) - Z  \omega) \rangle \nonumber \\
&=  -\tfrac{2}{d^2} \langle \partial_{y^j}, h_{ip}(x) \partial_{x^p} \rangle - \tfrac{2}{d^2}\tfrac{\partial Z}{\D {x^i}}\langle \partial_{y^j},  \omega \rangle
+ \tfrac{2}{d^2}Z \langle \partial_{y^j},  \partial_{x^i} \rangle.  
\end{align}
This proves the lemma.
\end{proof}

Arguing similarly, at a critical point of $Z$ with respect to $y$ we have
\begin{equation}
\frac{\partial^2 Z}{\D {y^i} \D {y^j}}= - \frac{2}{d^2} (Z \delta_{ij}+h_{ij}(y)).
\end{equation}

\begin{exercise}[Second $x$-derivatives] Show that
\begin{multline}
\frac{\partial^2 Z}{\D {x^i} \D {x^j}}= 
Z^2h_{ij}(x)
-Zh_{ip}(x)h_{pj}(x)+\tfrac{2}{d^2}\left(h_{ij}(x)-Z\delta_{ij}\right)\\
-\tfrac{2}{d^2}\ip{\omega,\D_{x^p}}\nabla_p h_{ij}(x)+ \tfrac{2}{d^2}\ip{\omega,\D_{x^i}}\tfrac{\D Z}{\D {x^j}}
+ \tfrac{2}{d^2}  \ip{\omega,\D_{x^j}}\tfrac{\D Z}{\D {x^i}}.
\end{multline}
\end{exercise}

\begin{proof}[Proof of Theorem \ref{subandsupersolution}]
We want to show that $Z_\ast$ is a viscosity supersolution. This means, given any point $(x,t)$ and any $C^2$-function $\phi=\phi(x,t)$, with $\phi\leq Z_\ast$ in a backwards parabolic neighborhood of $(x,t)$, and equality at $(x,t)$, we have to show that
\begin{equation}\label{viscinequ}
 \partial_t \phi \geq \Lap \phi+\abs{A}^2\phi.
\end{equation}
Let $y$ be a point where the infimum in the definition of $Z_\ast$ is attained. Summing up the expressions from the above formulas, we compute
\begin{align*}
 0&\leq -\D_t(Z-\phi)+\sum_{i=1}^n\D_{x^i} \D_{x^i}(Z-\phi)+2\D_{x^i} \D_{y^i}(Z-\phi)+\D_{y^i} \D_{y^i}(Z-\phi)\nonumber\\
&=\D_t\phi-\Lap\phi-\abs{A}^2\phi+\tfrac{4}{d^2} H(x) -\tfrac{4}{d^2}h_{ip}(x)\langle \partial_{y^i}, \partial_{x^p} \rangle \nonumber\\
&\quad - \tfrac{4n}{d^2}Z+\tfrac{4}{d^2}Z\ip{\D_{y^i},\D_{x^i}}
+ \tfrac{4}{d^2}\ip{\omega,\D_{x^i}-\D_{y^i}}\tfrac{\D Z}{\D {x^i}}\nonumber\\
&=\D_t\phi-\Lap\phi-\abs{A}^2\phi\nonumber\\
&\quad+\tfrac{4}{d^2}(h_{ip}(x)-Z\delta_{ip})\left(\delta_{ip}-\ip{\D_{y^i},\D_{x^p}}+\tfrac{2}{d^2}\ip{\omega,\D_{x^p}}\ip{\omega,\D_{y^i}-\D_{x^i}}\right).
\end{align*}
By definition of $Z_\ast$, we have $h_{ip}(x)-Z\delta_{ip}\geq 0$. Moreover, it follows from an elementary geometric argument, cf. \cite[Lemma 6]{Andrews_Langford_McCoy}, that
\begin{equation}
\delta_{ip}-\ip{\D_{y^i},\D_{x^p}}+\tfrac{2}{d^2}\ip{\omega,\D_{x^p}}\ip{\omega,\D_{y^i}-\D_{x^i}}\leq 0.
\end{equation}
Putting everything together we conclude that (\ref{viscinequ}) holds. The computation for $Z^\ast$ is similar, with some signs reversed.
\end{proof}

\section{Local curvature estimate and convexity estimate}

In this lecture, we discuss the local curvature estimate (aka gradient estimate) and the convexity estimate.
The original proofs of these estimates are very involved, see White \cite{white_size,white_nature} and Huisken-Sinestrari \cite{huisken-sinestrari1,huisken-sinestrari2}.
However, our new proofs \cite{HK1} are short enough that we can discuss them in full detail in a single lecture.

Motivated by Andrews' result, we formulate the estimates for a class of flows that we call $\alpha$-Andrews flows (Definition \ref{def_alandrews}). Before stating the definition, let us recall the following three important points:

First, mean convexity and the $\alpha$-Andrews condition are both preserved under mean curvature flow, and by compactness the $\alpha$-Andrews condition is always satisfied for some constant $\alpha>0$ given any mean convex initial hypersurface (smooth, closed, embedded). 
Second, as in \cite{white_size}, it is useful to view the mean curvature flow as an equation for the closed domains $K_t$ with $M_t=\partial K_t$.
Third, if we want local estimates, then it is crucial to consider flows in any open set $U\subseteq \R^{n+1}$.

\begin{definition}[\cite{HK1}]\label{def_alandrews}
A (smooth) \emph{$\alpha$-Andrews flow} in an open set $U\subseteq \R^{n+1}$ is a (smooth) mean convex mean curvature flow $\{K_t\subseteq U\}_{t\in I}$ that satisfies the $\alpha$-Andrews condition (see Definition \ref{def_andrews_static}).
\end{definition}

We now state our first main estimate. It gives curvature control on a whole parabolic ball, from a mean 
curvature bound at a single point.

\begin{theorem}[Local curvature estimate \cite{HK1}]\label{thm-intro_local_curvature_bounds}
For all $\al>0$ there exist $\rho=\rho(\al)>0$ and $C_\ell=C_\ell(\al)<\infty$
$(\ell=0,1,2,\ldots)$ with the following property.
If $\K$ is an $\al$-Andrews flow in a parabolic ball $P(p,t,r)$ centered at a boundary point 
$p\in \D K_t$ with   $H(p,t)\leq r^{-1}$, then  
\begin{equation}\label{eqn-intro_curvature_estimate}
 \sup_{P(p,t,\rho r)}\abs{\nabla^\ell A}\leq C_\ell r^{-(\ell+1)}\, .
\end{equation}
\end{theorem}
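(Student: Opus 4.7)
The plan is a blowup and smooth-compactness argument, combining Andrews' noncollapsing (Theorem \ref{thm_andrews}) with the strong maximum principle applied to the evolution of $H$ (Proposition \ref{prop_evol_eq}). By parabolic rescaling invariance of both hypothesis and conclusion, we normalize $r=1$ throughout.

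\textbf{Reduction to a pointwise $H$-bound.} The $\al$-Andrews condition provides tangent interior and exterior balls of radius $\al/H$ at every boundary point; comparison of $M_t$ with these umbilic spheres forces $\abs{\la_i}\leq H/\al$ for each principal curvature, hence
\[
\abs{A}\leq \tfrac{\sqrt{n}}{\al}\,H.
\]
So to prove $\abs{A}\leq Cr^{-1}$ on a parabolic ball it suffices to bound $H$ there. The higher-derivative estimates $\abs{\Grad^\ell A}\leq C_\ell r^{-(\ell+1)}$ on a slightly smaller parabolic ball then follow from standard interior parabolic Schauder theory for the quasilinear MCF system: the Andrews condition provides uniform $C^{1,\al}$ control of $\D K_t$, after which one bootstraps the evolution equations of Proposition \ref{prop_evol_eq} in the graph representation.

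\textbf{Blowup.} Suppose the $H$-bound fails. Then there exist $\al>0$ and $\al$-Andrews flows $\K^j$ in $P(0,0,1)$ with $H(0,0)\leq 1$ but points $(x_j,s_j)\to(0,0)$ with $H(x_j,s_j)\to\infty$. A Hamilton-type point-picking in $P(0,0,\tfrac{1}{2})$, carried out with a parabolic cutoff vanishing on $\D P(0,0,\tfrac{1}{2})$, yields points $(y_j,\tau_j)\to(0,0)$ and scales $Q_j:=H(y_j,\tau_j)\to\infty$ such that
\begin{enumerate}
\item[(i)] $\sup_{P(y_j,\tau_j,A_j/Q_j)} H\leq 2Q_j$ for some $A_j\to\infty$,
\item[(ii)] $Q_j\bigl(\abs{y_j}+\abs{\tau_j}^{1/2}\bigr)$ stays bounded.
\end{enumerate}
Translating $(y_j,\tau_j)$ to the origin and parabolically rescaling by $Q_j$ produces flows $\hat{\K}^j$ which are still $\al$-Andrews, satisfy $H(0,0)=1$, and satisfy $H\leq 2$ on $P(0,0,A_j)$. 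By the reduction above $\abs{A}$ is uniformly bounded there. The uniform $\al$-Andrews condition plus the curvature bound give uniform $C^{1,\al}$ regularity of $\D \hat K_t^j$ and, via the evolution equations, smooth compactness on compact sets. Passing to a subsequential limit yields an ancient smooth mean convex $\al$-Andrews flow $\hat{\K}^\infty$ on $\R^{n+1}\times(-\infty,0]$ with $H(0,0)=1$.

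\textbf{Contradiction and main obstacle.} By (ii) the rescaled images of the original base-point $(0,0)$ converge, along a subsequence, to some $(\xi,\sigma)\in\R^{n+1}\times(-\infty,0]$. Since $H$ at that image in $\hat{\K}^j$ is $\leq 1/Q_j\to 0$, the limit satisfies $H(\xi,\sigma)=0$. The strong maximum principle applied to $\D_t H=\Lap H+\abs{A}^2 H$ then forces $H\equiv 0$ on the connected component of $\hat M^\infty_\sigma$ through $\xi$, and forward parabolic uniqueness propagates this to all $t\geq\sigma$, contradicting $H(0,0)=1$. The main obstacle is the weighted point-picking producing (ii): a naive maximization of $H$ in $P(0,0,\tfrac12)$ yields only (i), after which $Q_j\abs{y_j}$ may drift to infinity and the single-point hypothesis $H(p,t)\leq r^{-1}$ is washed out in the limit. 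A secondary, milder issue is the connectedness of $\hat{\K}^\infty$, which can be arranged by performing the blowup inside a fixed connected component of $\D K_t$.
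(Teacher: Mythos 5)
Your opening observation — that the Andrews condition forces $\abs{\lambda_i}\leq H/\alpha$ and hence $\abs{A}\leq\sqrt{n}\,H/\alpha$, so the $\ell=0$ estimate reduces to a pointwise $H$-bound, after which Schauder gives higher derivatives — is correct and in fact a useful simplification. But the blowup scheme you build on it is not the paper's, and the gap you flag as ``the main obstacle'' is fatal, not merely technical.

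There is no point-picking, weighted or cutoff-enhanced or otherwise, that delivers your property (ii). Hamilton/Perelman-type point selection guarantees a nearby region of controlled curvature at the selected scale $Q_j^{-1}$ and keeps the picked point bounded away from the parabolic boundary; it gives no control on the product $Q_j\,d_{\mathrm{par}}\bigl((y_j,\tau_j),(0,0)\bigr)$. Once that product diverges, the rescaled image of the base point escapes to infinity, and the smooth limit you extract is simply \emph{some} ancient mean convex $\alpha$-Andrews flow with $H(0,0)=1$ — a shrinking round cylinder, for instance — which is perfectly consistent. The strong maximum principle then has nothing to bite on. Moreover, notice that your normalization $r=1$ throws away information: rescaling instead so the offending high-curvature point sits at unit parabolic distance (the paper's choice) makes the contradiction sequence live in $P(0,0,j)$ with $H(0,0)\leq j^{-1}\to 0$, which is strictly stronger than your $H(0,0)\leq 1$ and is precisely what the argument must exploit.

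The paper's proof is structurally different and avoids basepoint drift altogether. From $H(0,0)\leq j^{-1}$, together with the Andrews condition and a comparison-ball computation (Claim \ref{claim_halfspace}: a large ball tangent to $\{x_{n+1}=d\}$ cannot become interior to $K^j_t$ because $0\in\partial K^j_0$, and the Andrews exterior ball at the contact point pins $\partial K^j_t$ close to the hyperplane), one gets pointed Hausdorff convergence of $\K^j$ to a static halfspace. One-sided minimization (Exercise \ref{rem_one_sided_minimization}) then upgrades Hausdorff proximity to the area-density bound \eqref{eqn_densitybound}, which feeds into the local regularity theorem (Theorem \ref{app_thm_easy_brakke}) to force $\abs{A}\to 0$ on $P(0,0,1)$, a contradiction. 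The local regularity theorem is itself a blowup-with-point-picking argument, but it succeeds because its hypothesis — Gaussian density ratio close to $1$ — is imposed at \emph{every} point of $P(0,1)$, so it travels with the picked point. Your argument imposes smallness at a single point, which is exactly what is washed out. If you want to salvage a direct blowup proof along your lines, you would need a replacement for (ii): some quantity, monotone or a priori controlled at all nearby points, that records the flatness coming from $H(p,t)\leq r^{-1}$; in the paper that role is played by the Gaussian density.
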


The local curvature estimate can also be viewed as local Harnack inequality, saying that the curvatures at nearby points are comparable.

\begin{remark}
The local curvature estimate (Theorem \ref{thm-intro_local_curvature_bounds}) enables us to pass to smooth limits assuming only that we normalize the curvature at the base point; this facilitates arguments by contradiction, in particular the proof of the convexity and global curvature estimate, see below.
\end{remark}

\begin{proof}[Proof of Theorem \ref{thm-intro_local_curvature_bounds}]
Suppose the estimate doesn't hold. Then there exists a sequence $\{ \K^j\}$ of $\al$-Andrews flows defined in $P(0,0,j)$ with
$H(0,0)\leq j^{-1}$, but such that
\begin{equation}\label{eq_curv_contr}
\sup_{P(0,0,1)}|A|\geq j.
\end{equation}
We can choose coordinates such that the outward normal of $K^j_0$ at $(0,0)$ 
is $e_{n+1}$. Furthermore, by \cite[App. D]{HK1} we can assume that the sequence is admissible, i.e. that for every $R<\infty$ some time slice $K^j_{t_j}$ contains $B(0,R)$, for $j$ sufficiently large.

\begin{claim}\label{claim_halfspace} The sequence of mean curvature flows $\{\K^j\}$ converges in the
pointed Hausdorff topology to a static halfspace in $\R^{n+1}\times (-\infty,0]$, and similarly
for their complements. 
\end{claim}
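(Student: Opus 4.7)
The plan is to combine three ingredients: (i) the $\al$-Andrews condition at the basepoint to pin down the $t=0$ slice, (ii) mean convexity to obtain a one-sided inclusion for all $t\leq 0$, and (iii) rigidity of the limit to get the matching upper bound at negative times.

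First, at $t=0$: since $(0,0)\in \D K^j_0$ with $H(0,0)\leq 1/j$, the $\al$-Andrews condition produces a tangent interior ball $B_j^-=B(-r_je_{n+1},r_j)\subseteq K^j_0$ and a tangent exterior ball $B_j^+=B(r_je_{n+1},r_j)\subseteq \R^{n+1}\setminus K^j_0$ of common radius $r_j=\al/H(0,0)\geq \al j\to\infty$. On any fixed $B(0,R)$, these balls Hausdorff-converge to the closed lower and upper halfspaces with error $O(R^2/r_j)$. The sandwich $B_j^-\subseteq K^j_0\subseteq\R^{n+1}\setminus B_j^+$ then forces $K^j_0\to\{x_{n+1}\leq 0\}$ in the pointed Hausdorff topology, and likewise for the complements.

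Second, for $t\leq 0$ I would use mean convexity: the boundary moves inward in forward time, so $K^j_t\supseteq K^j_0\supseteq B_j^-$ for every $t\leq 0$. After extracting a pointed Hausdorff subsequential limit $\K^\infty$ via the compactness theorem for admissible $\al$-Andrews flows from \cite[App.\ D]{HK1}, this gives $K^\infty_t\supseteq\{x_{n+1}\leq 0\}$ for every $t\leq 0$, with equality at $t=0$ by the first step.

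Third --- the main obstacle --- I need the matching upper bound $K^\infty_t\subseteq\{x_{n+1}\leq 0\}$, since mean convexity gives the wrong direction (backward in time the domain can only grow). The plan is rigidity: the static halfspace is itself a smooth MCF that agrees with $\K^\infty$ at $t=0$, and the pointwise Andrews bound $\abs{A}\leq\sqrt{n}H/\al$ passes to the limit, so $\K^\infty$ is smooth in a neighborhood of $(0,0)$ where $H\to 0$. Since $\D K^\infty_0=\{x_{n+1}=0\}$ has $H\equiv 0$, the strong maximum principle for $\D_tH=\Lap H+\abs{A}^2H$ (which propagates the zero set of $H$ backward in parabolic time) forces $H\equiv 0$ along the connected boundary component through $(0,0)$ for all $t\leq 0$. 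Combined with mean convexity and the lower bound from Step 2, this pins $\D K^\infty_t=\{x_{n+1}=0\}$, so $K^\infty_t=\{x_{n+1}\leq 0\}$; since every subsequence has a sub-subsequence with this limit, the full sequence converges.

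The hard part is precisely the limit argument in Step 3: justifying smooth convergence of $\K^j$ in an open neighborhood of $(0,0)$ (which needs $\abs{A}^j$-bounds near $(0,0)$, coming from the Andrews inequality together with smallness of $H$ there), and then propagating flatness from $(0,0)$ across the entire limiting hyperplane via connectedness. This relies on the compactness/regularity package for $\al$-Andrews flows and on admissibility, which together rule out degenerate limits such as a lower-dimensional piece bubbling off near the basepoint.
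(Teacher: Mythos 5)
Your Steps 1 and 2 are fine: the $\al$-Andrews condition at $(0,0)$ with $H(0,0)\leq 1/j$ does pin down the $t=0$ slice via sandwiching balls, and mean convexity gives $K^j_t\supseteq K^j_0$ for $t\leq 0$, hence the lower containment for the limit and the statement about complements. The problem is Step 3, and it is not a technicality but a circularity. To extract a \emph{smooth} limit $\K^\infty$ in a parabolic neighborhood of $(0,0)$ and then run the strong maximum principle on $\partial_t H=\Lap H+\abs{A}^2 H$, you need uniform bounds on $\abs{A}^j$ on a spacetime neighborhood of $(0,0)$. The Andrews bound $\abs{A}\leq\sqrt{n}\,H/\al$ converts this to a bound on $H$ near $(0,0)$ --- but controlling $H$ on $P(0,0,\rho)$ from $H(0,0)\leq 1/j$ is \emph{exactly} the statement of Theorem \ref{thm-intro_local_curvature_bounds}, whose proof you are in the middle of. The compactness available from \cite[App.\ D]{HK1} at this stage gives only Hausdorff/weak convergence of the domains and admissibility of the sequence; it does not upgrade to smooth convergence without the very curvature estimate you are trying to prove. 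A second, independent gap: even granting a smooth limit near $(0,0)$, the strong maximum principle propagates $H\equiv 0$ only along the connected boundary component through $(0,0)$. It does not exclude the Hausdorff limit $K^\infty_t$, for $t<0$, from having extra components floating in $\{x_{n+1}>0\}$ which shrink away before $t=0$; Step 2 gives no upper barrier at all, and ``the component through the origin is a plane'' plus ``$K^\infty_t\supseteq\{x_{n+1}\leq 0\}$'' does not imply $K^\infty_t=\{x_{n+1}\leq 0\}$.

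The paper's proof circumvents both issues by working quantitatively with each $K^j_t$ directly, never passing to a smooth limit. For each $R,d$, one runs the avoidance principle against the large ball $\bar B_{R,d}$ tangent to $\{x_{n+1}=d\}$ at $d\,e_{n+1}$: because $0\in\D K^j_0$, this ball cannot lie in $\Int K^j_t$ for $t\in[-T,0]$ with $T\simeq Rd$, so one obtains an interior contact point $q_j$ of $\bar B_{R,d_j}$ with $\D K^j_t$ at which $H(q_j,t)\leq n/R$. The $\al$-Andrews condition then supplies an \emph{exterior} ball of radius $\geq\al R/n$ tangent there, and a short computation bounds the height of all of $K^j_t\cap B(0,\sqrt{Rd})$ by $\lesssim d/\al$. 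This controls the entire time slice at once (all components) on a definite ball, at every $t\in[-T,0]$, using only the two inputs that are actually available at this point of the proof --- avoidance and Andrews --- and the lower bound for the complements follows from forward-evolving a big interior ball from time $-T$. If you want to salvage your approach, you would need to replace the smooth-limit/maximum-principle step by a direct quantitative barrier like this one.
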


\begin{proof}[Proof of Claim \ref{claim_halfspace}]
For $R<\infty$, $d>0$ let 
$
\bar B_{R,d}= \ol{B((-R+d)e_{n+1},R)}\,,
$
so $\bar B_{R,d}$ is  the closed $R$-ball tangent to
the horizontal 
hyperplane $\{ x_{n+1} = d \}$   at the point $d\,e_{n+1}$.
When $R$ is large, 
it will take time approximately $Rd$ for $\bar B_{R,d}$ to leave the upper 
halfspace $\{x_{n+1}>0\}$.  Since $0\in \D K^j_0$ for all $j$, it follows
that $\bar B_{R,d}$ cannot be contained in the interior of $K^j_t$
for any $t \in [-T,0]$, where $T \simeq Rd$.   
Thus, for large $j$ 
we can
find $d_j\leq d$
such that $\bar B_{R,{d_j}}$ has interior contact with $K^j_t$ at 
some point $q_j$, where $\langle q_j,e_{n+1}\rangle < d$, $\|q_j\|\lesssim \sqrt{Rd}$,
and moreover
$\liminf_{j\ra\infty} \langle q_j,e_{n+1}\rangle\geq 0$. 

The mean 
curvature satisfies $H(q_j,t)\leq \frac{n}{R}$.  Since $K^j_t$ satisfies 
the $\al$-Andrews condition, 
 there is a closed ball $\bar B_j$
with radius at least $\frac{\al R}{n}$ making  exterior contact with $K^j_0$
at $q_j$.
By a simple geometric calculation, this 
implies that $K^j_t$ has  height  $\lesssim \frac{d}{\al} $ in the ball
$B(0,R')$ where $R'$ is comparable  to $\sqrt{Rd}$.  As $d$ and $R$ are 
arbitrary, this implies that for any $T>0$,  and any compact subset $Y\subset\{x_{n+1}>0\}$, 
 for large $j$ the time slice
$K^j_t$ is disjoint from $Y$, for all 
$t \geq -T$. 

Finally, observe that for any $T>0$ and any
compact subset $Y\subset \{x_{n+1}<0\}$, 
the time slice
$K^j_t$ contains $Y$ for all 
$t \in[-T, 0]$, and large $j$,
because $K^j_{-T}$ contains a ball whose forward evolution under
MCF contains $Y$ at any time $t\in [-T, 0]$. This proves the claim.
\end{proof}

Finishing the proof of the theorem, by Claim \ref{claim_halfspace}, admissibility, and one-sided minimization (see below),
we get 
 for every $\eps>0$, every $t\leq 0$ and every ball $B(x,r)$ centered
on the hyperplane $\{x_{n+1}=0\}$, that 
\begin{equation}\label{eqn_densitybound}
|\D K_t^j \cap B(x,r)| \leq (1+\eps)\omega_n r^n\,,
\end{equation}
for $j$ large
enough. Hence, the local regularity theorem for the mean curvature flow (Theorem \ref{app_thm_easy_brakke}) implies $\lim\sup_{j\to\infty}\sup_{P(0,0,1)}\abs{A}=0$; this contradicts \eqref{eq_curv_contr}.
\end{proof}

\begin{exercise}[One-sided minimization]
\label{rem_one_sided_minimization}
Use Stokes' theorem to prove the following. If $\{K_{t'}\subseteq U\}_{t'\leq t}$
is a smooth family of mean convex domains 
such that $\{\D K_{t'}\}$ foliates $U\setminus\Int(K_t)$, then
\begin{equation}
 |\D K_t\cap V|\leq |\D K'\cap V|
\end{equation}
for every closed domain $K'\supseteq K_t$ which agrees with $K_t$ outside a 
compact smooth domain $V\subseteq U$. Using this, prove the density bound \eqref{eqn_densitybound}.
\end{exercise}

Our next estimate gives pinching of the curvatures towards positive.

\begin{theorem}[Convexity estimate \cite{HK1}]\label{thm-intro_convexity_estimate}
For all $\eps>0$, $\al >0$, there exists $\eta=\eta(\eps,\al)<\infty$ with the following property.
If $\K$ is an $\al $-Andrews flow in a parabolic ball  $P(p,t,\eta\, r)$ centered at a boundary point 
$p\in \D K_t$ with $H(p,t)\leq r^{-1}$, 
then
\begin{equation}
\la_1(p,t)\geq -\eps r^{-1}.
\end{equation}  
\end{theorem}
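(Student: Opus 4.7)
My plan is an argument by contradiction combined with a blowup analysis. Suppose the estimate fails: for some $\eps_0>0$ there is a sequence of $\alpha$-Andrews flows $\K^j$ in $P(p_j,t_j,j r_j)$ with $H(p_j,t_j)\leq r_j^{-1}$ but $\lambda_1(p_j,t_j)<-\eps_0 r_j^{-1}$. Translating $(p_j,t_j)$ to the space-time origin and parabolically rescaling by $r_j$, I reduce to $\alpha$-Andrews flows in $P(0,0,j)$ with $H(0,0)\leq 1$ and $\lambda_1(0,0)\leq -\eps_0$, and it suffices to rule this sequence out.

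Next I would extract an ancient smooth limit. Starting from $H(0,0)\leq 1$, the local curvature estimate (Theorem \ref{thm-intro_local_curvature_bounds}) gives uniform $C^\ell$ bounds on $A$ on a small parabolic neighborhood of the origin. Iterating the estimate, each application extending the region of curvature control by a fixed parabolic distance, produces uniform $C^\ell$ bounds on any prescribed compact subset of $\R^{n+1}\times(-\infty,0]$ for $j$ large. A subsequential smooth limit then yields an ancient smooth $\alpha$-Andrews flow $\K^\infty$ on $\R^{n+1}\times(-\infty,0]$ satisfying $H(0,0)\leq 1$ and $\lambda_1(0,0)\leq -\eps_0<0$.

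The substantive step is to prove the rigidity statement that every smooth ancient $\alpha$-Andrews flow is weakly convex, which will contradict $\lambda_1(0,0)<0$ on $\K^\infty$. The strong maximum principle for $\partial_t H=\Lap H+|A|^2 H$ shows that either $H\equiv 0$ (so $\K^\infty$ is a static plane with $\lambda_1\equiv 0$, contradicting $\lambda_1(0,0)<0$) or $H>0$ everywhere. Assuming the latter, set $f=\lambda_1/H$. Combining the viscosity inequality $(\partial_t-\Lap)\lambda_1\geq |A|^2\lambda_1$ with the evolution of $H$ from Proposition \ref{prop_evol_eq}, exactly as in the proof of Theorem \ref{thm_andrews}, one gets
\begin{equation*}
\partial_t f \geq \Lap f + 2\langle \Grad \log H, \Grad f\rangle
\end{equation*}
in the viscosity sense. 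The maximum principle makes $m(t):=\inf_{M_t} f$ non-decreasing, and the $\alpha$-Andrews condition forces $m(t)\geq -1/\alpha$, so $m_{-\infty}:=\lim_{t\to-\infty} m(t)\in [-1/\alpha,0]$ exists. To rule out $m_{-\infty}<0$, I would extract a further blowup capturing the $t\to-\infty$ asymptotics (selecting basepoints nearly achieving the infimum and rescaling by $H$ to normalize the curvature) and apply the strong maximum principle to force $f\equiv m_{-\infty}$ on this second limit. The induced geometric splitting with a constant negative eigenvalue of $A/H$ would violate the $\alpha$-Andrews condition on the split factor, giving the desired contradiction and showing $m_{-\infty}\geq 0$.

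The main obstacle is precisely this rigidity step for ancient $\alpha$-Andrews flows. The monotonicity of $m(t)$ and the Andrews lower bound are elementary, but promoting $m_{-\infty}$ from merely bounded to non-negative requires an asymptotic blowup, a careful use of the strong maximum principle to force a splitting, and then an appeal to $\alpha$-noncollapsing on the limit to exclude that splitting. The remaining ingredients --- the contradiction setup, the iterative use of the local curvature estimate to produce an ancient limit, and the monotonicity argument via the maximum principle --- are routine given the tools already developed in the preceding sections.
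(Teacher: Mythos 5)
Your contradiction setup is close in spirit to the paper's, but there are two genuine gaps in the execution, and the paper's proof sidesteps both with a cleverer normalization.

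First, the claim that iterating the local curvature estimate gives uniform $C^\ell$ bounds on \emph{any prescribed compact subset} of $\R^{n+1}\times(-\infty,0]$ is false. Each application of Theorem~\ref{thm-intro_local_curvature_bounds} gives $|A|\leq C_0 r^{-1}$ on $P(\cdot,\rho r)$, so after step $k$ the bound is of order $(nC_0)^k$ and the radius of the next parabolic ball is of order $\rho/(nC_0)^k$; the total reach $\rho\sum_k (nC_0)^{-k}$ is a \emph{finite} geometric sum (assuming $nC_0>1$, which there is no reason to exclude). You therefore cannot manufacture an ancient limit on all of spacetime this way. Unlimited spatial-temporal reach with uniform bounds is precisely the content of the global curvature estimate (Theorem~\ref{thm-more_global_parabolic_bounds}), whose proof in \cite{HK1} \emph{uses} the convexity estimate; invoking it here would be circular. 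Second, even granting an ancient limit, your rigidity step needs a second blowup as $t\to-\infty$ to force $f=\lambda_1/H$ to be constant; that again requires a compactness theorem for ancient $\alpha$-Andrews flows that is not yet available, and the strong maximum principle wants the infimum attained at an \emph{interior} spacetime point, which an asymptotic infimum does not directly provide. Indeed, the statement ``ancient $\alpha$-Andrews flows are convex'' is stated in the text as a \emph{corollary} of Theorem~\ref{thm-intro_convexity_estimate}, so proving it first is the wrong order.

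The paper's actual proof avoids both issues by redefining $\eps_0$ as the \emph{infimum} of all $\eps$ for which the estimate holds (a priori $\eps_0\leq 1/\alpha$ by the Andrews condition), and supposing $\eps_0>0$. Then one takes only the \emph{local} smooth subsequential limit $\K^\infty$ on $P(0,0,\rho)$, which Theorem~\ref{thm-intro_local_curvature_bounds} already provides. Because $\eps_0$ is the infimum, for every $\eps>\eps_0$ the estimate with parameter $\eps$ applies at all nearby points of $\K^j$ (their parabolic neighborhoods eventually reach the required size $\eta(\eps,\alpha)H^{-1}$), forcing $\lambda_1/H\geq -\eps_0$ throughout $P(0,0,r)$ on the limit, with equality at the interior point $(0,0)$ and $H(0,0)=1$. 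The equality case of the strong maximum principle for $\lambda_1/H$ then gives a local product splitting, so $\lambda_1\lambda_n=0$, contradicting $\lambda_1<0<\lambda_n$. No ancient limit, no second blowup, no $t\to-\infty$ asymptotics.
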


The convexity estimate (Theorem \ref{thm-intro_convexity_estimate}) says that a boundary point $(p,t)$ in an
$\al$-Andrews flow has  almost positive definite
second fundamental form, assuming only that the flow has had a chance to evolve
over a portion of spacetime which is 
large compared to $H^{-1}(p,t)$. 
In particular, ancient $\alpha$-Andrews flows $\{K_t\subset\R^{n+1}\}_{t\in (-\infty,T)}$ (e.g. blowup limits) are always convex; this is crucial for the analysis of singularities.

\begin{proof}[Proof of Theorem \ref{thm-intro_convexity_estimate}]
Fix $\al $.  The $\al$-Andrews condition implies that the assertion holds for $\eps = \frac1\al $. 
Let $\eps_0\leq \frac1\al $ be the infimum of the $\eps$'s for which it holds, 
and suppose towards a contradiction that $\eps_0>0$.

It follows that there is a sequence $\{\K^j\}$ of $\al$-Andrews flows, 
where for all $j$, $(0,0)\in \D \K^j$, $H(0,0)\leq 1$ and $\K^j$ is defined in 
$P(0,0,j)$, but ${\la_1}(0,0)\to -\eps_0$ as $j\ra \infty$.
After passing to a subsequence, 
$\{\K^j\}$  converges smoothly to a mean curvature flow $\K^\infty$ in the parabolic
ball $P(0,0,\rho)$, where $\rho=\rho(\al)$ is the quantity from Theorem
\ref{thm-intro_local_curvature_bounds}. Note that for $\K^\infty$ we have $\lambda_1(0,0)=-\eps_0$ and thus $H(0,0)=1$.

By continuity $H>\frac12$
in  $P(0,0,r)$ for some $r\in (0,\rho)$.  Furthermore we have $\frac{\la_1}{H}\geq -\eps_0$
everywhere in $P(0,0,r)$.  This is because   every $(p,t)\in \D\K^\infty\cap P(0,0,r)$
is a limit of a sequence $\{(p_j,t_j)\in  \D \K^j\}$ of boundary 
points, and  for every $\eps > \eps_0$, if 
$\eta=\eta(\eps,\al)$, then for 
large $j$, $\K^j$ is defined in $P(p_j,t_j,\eta H^{-1}(p_j,t_j))$,
which implies that 
the ratio $\frac{\la_1}{H}(p_j,t_j)$ is bounded below by $-\eps$. 
Thus, in the parabolic ball $P(0,0,r)$,  the ratio $\frac{\la_1}{H}$ attains a 
negative minimum $-\eps_0$ at $(0,0)$. Since $\lambda_1<0$ and $\lambda_{n}>0$ the Gauss curvature $K=\lambda_1\lambda_{n}$ is strictly negative.
However, by the equality case of the maximum principle for $\tfrac{\lambda_1}{H}$, the hypersurface locally splits as a product and thus this Gauss curvature must vanish; a contradiction.
\end{proof}

\section{Regularity and structure theory for weak solutions}

We start this lecture by stating our third main estimate.

\begin{theorem}[Global curvature estimate \cite{HK1}]
\label{thm-more_global_parabolic_bounds}
For all $\al>0$ and $\Lambda<\infty$, there exist 
$\eta=\eta(\al,\Lambda)<\infty$ and $C_\ell=C_\ell(\al,\Lambda)<\infty$ $(\ell =0,1,2,\ldots)$ such that if $\K$ is an $\alpha$-Andrews flow
in a parabolic ball $P(p,t,\eta r)$ centered at a boundary point $p\in \D K_t$ with $H(p,t)\leq r^{-1}$, then
\begin{equation}\label{glob_der_est}
\sup_{P(p,t,\Lambda r)}|\nabla^\ell A|\leq C_\ell r^{-(\ell+1)}\qquad (\ell=0,1,2,\ldots).
\end{equation}
\end{theorem}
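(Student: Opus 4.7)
The plan is a parabolic blow-up and compactness argument, in the same spirit as the proofs of Theorems \ref{thm-intro_local_curvature_bounds} and \ref{thm-intro_convexity_estimate}. Rescaling, I may assume $r=1$. Suppose the claim fails, so there exist $\alpha$-Andrews flows $\K^j$ in $P(0,0,\eta_j)$ with $\eta_j\to\infty$ and $H(0,0)\le 1$, but $\sup_{P(0,0,\Lambda)}\abs{A}\to\infty$. Using the point-picking argument from the proof of Theorem \ref{app_thm_easy_brakke} inside $P(0,0,\Lambda)$, I extract points $(q_j,s_j)$ with $Q_j:=\abs{A}(q_j,s_j)\to\infty$ and $\abs{A}\le 2Q_j$ on a parabolic neighborhood of radius $\sigma_j/Q_j$ with $\sigma_j\to\infty$. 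Parabolic rescaling by $Q_j$ centered at $(q_j,s_j)$ then produces $\alpha$-Andrews flows $\hat{\K}^j$ with $\abs{A}(0,0)=1$ and $\abs{A}\le 2$ on $P(0,0,\sigma_j)$.

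Next, applying the local curvature estimate (Theorem \ref{thm-intro_local_curvature_bounds}) at every point of $\hat{\K}^j$ at scale $\abs{A}^{-1}$ yields uniform bounds on all $\abs{\nabla^\ell A}$ on any fixed compact subset. A subsequence therefore converges smoothly to a smooth ancient $\alpha$-Andrews flow $\hat{\K}^\infty$ on $\R^{n+1}\times(-\infty,0]$ with $\abs{A}(0,0)=1$ and $\abs{A}\le 2$. Since the flows $\hat{\K}^j$ are defined on parabolic balls of size diverging to infinity, the convexity estimate (Theorem \ref{thm-intro_convexity_estimate}) applied with any fixed $\epsilon>0$ gives $\lambda_1\ge -\epsilon H$ at every fixed point for $j$ large; letting $\epsilon\to 0$ yields $\lambda_1\ge 0$ on $\hat{\K}^\infty$. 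Hence $\hat{\K}^\infty$ is a smooth convex ancient $\alpha$-Andrews flow of bounded curvature, nonflat since $\abs{A}(0,0)=1$.

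To close the contradiction, I exploit the information carried by the rescaled image of the original basepoint, $\hat{p}_j=(-Q_j q_j,-Q_j^2 s_j)$: it lies at parabolic distance $\le \Lambda Q_j$ from the origin of $\hat{\K}^j$, has $H(\hat{p}_j)\le 1/Q_j\to 0$, and carries interior and exterior Andrews balls of radius $\alpha Q_j\to\infty$ tangent at $\hat{p}_j$. These two enormous sandwiching balls, combined with the convexity of $\hat{\K}^\infty$ and the one-sided minimization estimate (Exercise \ref{rem_one_sided_minimization}), should force the Gaussian density ratio of $\hat{\K}^\infty$ at the origin to be $\le 1+o(1)$; Huisken's monotonicity formula (Theorem \ref{thm_huisken_mon}) together with the local regularity theorem (Theorem \ref{app_thm_easy_brakke}) then give $\abs{A}(0,0)=0$ on $\hat{\K}^\infty$, contradicting $\abs{A}(0,0)=1$. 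Once the $\ell=0$ estimate is in hand, the bounds for $\ell\ge 1$ follow by applying Theorem \ref{thm-intro_local_curvature_bounds} at every point of a slightly shrunken subball (at the cost of enlarging $\eta$). The main obstacle is this final step: transferring the two-sided ball information from the diverging far-away basepoint $\hat{p}_j$ into a halfspace-type density bound near the origin of $\hat{\K}^\infty$ is the delicate geometric piece, requiring careful control of the sandwich as $j\to\infty$.
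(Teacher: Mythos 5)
Your setup through the construction of the smooth convex ancient limit $\hat{\K}^\infty$ is sound and is essentially the same blow-up/compactness scheme that the paper uses in the proof of Theorem \ref{thm-intro_convexity_estimate}. The problem is the final step, which you yourself flag as the delicate one: it contains a genuine gap, not just a detail to be checked. After point-picking inside $P(0,0,\Lambda)$ and rescaling by $Q_j=\abs{A}(q_j,s_j)\to\infty$, the image $\hat p_j=(-Q_jq_j,-Q_j^2s_j)$ of the original basepoint sits at parabolic distance on the order of $\Lambda Q_j\to\infty$ from the new origin. The interior/exterior Andrews balls tangent at $\hat p_j$ have radius roughly $\alpha Q_j$, so they are confined to distance $\lesssim 2\alpha Q_j$ from $\hat p_j$; for $\Lambda>2\alpha$ they never come close to the origin, and in the limit $\hat p_j$ and its sandwiching balls simply escape to infinity. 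There is therefore no mechanism by which this data can pin down the Gaussian density ratio at the origin of $\hat{\K}^\infty$, and indeed the limit need not be flat: shrinking cylinders and other nonflat convex ancient $\alpha$-Andrews flows are perfectly consistent with everything you have retained. So the contradiction you want to close does not follow.

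The paper's proof (sketched in the remark after the theorem, with full detail in \cite{HK1}) avoids exactly this loss of information. Rather than point-picking at an arbitrary blow-up point inside $P(0,0,\Lambda)$ and rescaling, one looks at the \emph{supremal} radius $R_0$ for which a curvature bound of the desired form holds, passes to a smooth convex limit in the parabolic ball $P(0,0,R_0)$ at the \emph{original} scale (so the basepoint stays at the origin and its $H\le 1$ normalization is not diluted), and observes that curvature must blow up as one approaches the boundary of the ball. Blowing up that limit near $\partial B(0,R_0)$ produces a limit flow whose final time-slice is a nonflat convex cone; a smooth mean curvature flow cannot have such a time-slice, which is the contradiction. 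The key structural difference is that by working at the scale of $R_0$ rather than at the scale $Q_j^{-1}$ of the curvature blow-up, the marginal control at the basepoint and the marginal blow-up near $\partial B(0,R_0)$ both survive in the limit and can be played against each other. If you want to salvage your version, you would need to replace the generic point-picking by a selection that keeps the distance from the chosen point to the basepoint comparable to the chosen curvature scale, which is in effect a reformulation of the supremal-radius device.
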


\begin{remark}
Our proof of the global curvature estimate (Theorem \ref{thm-more_global_parabolic_bounds}) is based on the local curvature estimate (Theorem \ref{thm-intro_local_curvature_bounds}) and the convexity estimate (Theorem \ref{thm-intro_convexity_estimate}).
Roughly speaking, the crux of the argument is as follows:
Arguing by contradiction, we look at the supremal radius $R_0$ 
where such a curvature bound holds,  which then allows us to pass to a smooth convex
limit in the open ball $B(0,R_0)$.
Carefully examining the structure of this limit, we can then find a blowup limit whose final time-slice is a non-flat convex cone; this however can never happen under mean curvature flow. See \cite{HK1} for a detailed proof.
\end{remark}

For the rest of this lecture, we will explain how our three main estimates yield a streamlined treatment of White's regularity and structure theory for mean convex mean curvature flow \cite{white_size,white_nature,white_subsequent}. The two main theorems describe the structure and the size of the singular set in a weak flow (level set flow) of mean convex hypersurfaces.

Given any compact smooth mean convex domain $K_0\subset\R^{n+1}$, we consider the \emph{level set flow} $\{K_t\subset \R^{n+1}\}_{t\geq 0}$ starting at $K_0$ \cite{evans-spruck,CGG,Ilmanen}.
The level set flow can be defined as the maximal family of closed sets $\{K_t\}_{t\geq 0}$ starting at $K_0$ that satisfies the avoidance principle
\begin{equation*}
K_{t_0}\cap L_{t_0}=\emptyset\qquad\Rightarrow \qquad K_{t}\cap L_{t}=\emptyset
\quad \textrm{for all}\,\, t\in[t_0,t_1],
\end{equation*}
whenever $\{L_t\}_{t\in [t_0,t_1]}$ is a smooth compact mean curvature flow.
The definition is phrased in such a way that existence and uniqueness are immediate. 
Moreover, mean convexity is preserved, i.e.  $K_{t_2}\subseteq K_{t_1}$ whenever $t_2\geq t_1$.
Also,
the level set 
flow of $K_0$ coincides with
smooth mean curvature flow of $K_0$ for as long as the latter is defined.

\begin{definition}[\cite{HK1}]
\label{def_viscosity_mean_curvature}
Let $K\subseteq \R^{n+1}$ be a compact domain.   If $p\in \D K$, then the {\em viscosity mean curvature of $K$ at $p$} is
$$
H(p)=\inf\{H_{\D X}(p)\mid X\subseteq K\;\text{is a compact smooth domain,}
\;  p\in \D X\},\\
$$
where $H_{\D X}(p)$ denotes the mean curvature of $\D X$ at $p$.
By the usual convention, the infimum of the empty set is $\infty$.
\end{definition}

Based on the notion of viscosity mean curvature, we can define the $\alpha$-Andrews condition (Definition \ref{def_alandrews}) for mean convex level set flow.
Using elliptic regularization, we see that the Andrews condition is preserved also beyond the first singular time, and that the our three main estimates (Theorem \ref{thm-intro_local_curvature_bounds}, Theorem \ref{thm-intro_convexity_estimate} and Theorem \ref{thm-more_global_parabolic_bounds}) hold in the general setting of $\alpha$-Andrews (level set) flows, see \cite{HK1} for details.

Next, our main structure theorem shows that ancient $\al$-Andrews flows (e.g. blowups) are smooth and convex until they become extinct.

\begin{theorem}[Structure of ancient $\al$-Andrews flows \cite{HK1}]\label{thm_structure}
\label{thm-intro_smooth_convex_til_extinct}
Let $\K$ be an ancient $\alpha$-Andrews flow defined on $\R^N\times (-\infty,T_0)$ (typically $T_0=\infty$, but we allow $T_0<\infty$ as well),
and let $T\in (-\infty,T_0]$ be the extinction time of $\K$, 
i.e. the supremum of all $t$ with $K_t\neq\emptyset$. Then:
\begin{enumerate}
\item $\K\cap \{t<T\}$ is smooth. In fact, there exists a function $\ol{H}$ depending only on the Andrews constant $\al$ such that whenever $\tau< T-t$, then
$
H(p,t)\leq \ol{H}(\tau,d(p,K_{t+\tau})).
$
\item $\K$ has convex time slices.
\item $\K$ is either a static halfspace, or it has strictly positive mean curvature and sweeps out all space, i.e. $\bigcup_{t<T_0}K_t=\R^{n+1}$.
\end{enumerate}
Furthermore, if $\K$ is backwardly self-similar, then it is either (i) a static
halfspace or (ii) a shrinking round sphere or cylinder.
\end{theorem}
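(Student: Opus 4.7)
The plan is to deduce the four assertions sequentially from the three main estimates (Theorems \ref{thm-intro_local_curvature_bounds}, \ref{thm-intro_convexity_estimate}, \ref{thm-more_global_parabolic_bounds}) proved earlier, together with maximum-principle and ball-comparison arguments. The ancient hypothesis enters decisively in parts (2) and (3) by allowing us to take the neighborhood radius $\eta$ in the convexity and global curvature estimates arbitrarily large, producing sharp limiting inequalities.

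For part (1), I first bound $H(p,t)$ pointwise by comparison with the shrinking interior ball furnished by the $\al$-Andrews condition. The interior ball $B \subseteq K_t$ tangent at $p$ has radius $r = \al/H(p,t)$; by avoidance its forward shrinking-ball evolution remains inside $K_{t+s}$ until extinction at $s = r^2/(2n)$, so the center of $B$ lies in $K_{t+\tau}$ whenever $\tau < r^2/(2n)$, yielding $H(p,t) \leq \al/d(p, K_{t+\tau})$ in that regime. Combining this with a scaling and compactness argument based on the local curvature estimate (any sequence violating the bound would subconverge, after rescaling, to a nontrivial ancient flow incompatible with parts (2) and (3) below) produces the function $\bar H(\tau, d)$ depending only on $\al$. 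Once $H$ is bounded at $(p,t)$, Theorem \ref{thm-more_global_parabolic_bounds} applied with $\Lambda$ chosen sufficiently large delivers bounds on all higher derivatives of $A$ in any prescribed parabolic neighborhood, establishing smoothness of $\K \cap \{t < T\}$. For part (2), I apply Theorem \ref{thm-intro_convexity_estimate} pointwise: at every $(p,t)$ with $H(p,t) > 0$, the fact that $\K$ is ancient means $P(p, t, \eta/H(p,t))$ lies inside the flow for arbitrarily large $\eta = \eta(\eps, \al)$, hence $\la_1(p,t) \geq -\eps H(p,t)$ for every $\eps > 0$; letting $\eps \to 0$ gives $\la_1(p,t) \geq 0$ and weak convexity of every time slice.

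For part (3), first suppose $H \equiv 0$ on all of $\D \K \cap \{t < T\}$. Then Proposition \ref{prop_evol_eq} makes $\K$ static, each $K_t$ is convex with $\la_i \geq 0$ and $\sum_i \la_i \equiv 0$, forcing $\la_i \equiv 0$; a complete totally geodesic convex hypersurface in $\R^{n+1}$ is a hyperplane, so $\K$ is a static halfspace. Otherwise $H(p_0, t_0) > 0$ at some point. Combining the strong maximum principle applied to $\D_t H = \Lap H + |A|^2 H$ with the splitting rigidity for $\la_1/H$ used at the end of the proof of Theorem \ref{thm-intro_convexity_estimate} — any zero of $H$ or of $\la_1/H$ on a connected time slice forces the hypersurface to split off an $\R$-factor and eventually reduces us to the halfspace case — we obtain $H > 0$ throughout $\D\K \cap \{t < T\}$. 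For the sweep-out, argue by contradiction: if some $x_0 \in \R^{n+1}$ lies outside $\bigcup_{t<T_0} K_t$, convexity of each $K_t$ together with the monotonicity $K_{t_1} \supseteq K_{t_2}$ for $t_1 \leq t_2$ produces a limiting closed halfspace $L$ containing every $K_t$ with $x_0 \notin L$; inscribing large shrinking balls inside $L$ at very negative times and comparing with $\K$ via avoidance, together with the $\al$-Andrews condition preventing $\D K_t$ from collapsing onto $\D L$, forces $\K$ to coincide with the static halfspace $L$, contradicting $H > 0$.

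Finally, a backwardly self-similar ancient $\al$-Andrews flow has $K_t = \sqrt{-t}\, K_{-1}$ for all $t < 0$, so by parts (2) and (3) it is either a static halfspace, yielding (i), or $M_{-1} = \D K_{-1}$ is a complete, convex, smooth hypersurface with $H > 0$ satisfying the self-shrinker equation $\vec H + \tfrac{x^\perp}{2} = 0$. Huisken's classification of mean-convex self-shrinkers with polynomial volume growth then identifies $M_{-1}$ as a round sphere $S^n$ or a generalized cylinder $S^k \times \R^{n-k}$, giving (ii). The hardest step will be the sweep-out in part (3): extracting the global geometric conclusion $\bigcup_t K_t = \R^{n+1}$ from the pointwise positivity of $H$ requires a careful limiting procedure as $t \to -\infty$ together with a delicate use of the $\al$-Andrews condition to rule out degenerate ``trapped in a halfspace'' scenarios, which none of the pointwise curvature or convexity estimates control directly.
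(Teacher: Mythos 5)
Your overall strategy---deduce everything from the three main estimates, letting the neighbourhood radius in the convexity and global curvature estimates go to infinity because the flow is ancient and entire---is exactly the right one, and the $\eta\to\infty$ argument you give for $\la_1\geq 0$ in part (2) is precisely the paper's. However there are three genuine gaps.

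The most serious is in part (2). The convexity estimate with $\eta\to\infty$ gives $\la_1\geq 0$ at every boundary point, hence that each \emph{connected component} of $K_t$ is convex. It does not show that $K_t$ \emph{is} convex, which additionally requires $K_t$ to be connected, and this is the heart of assertion (2). The paper proves connectedness by a separate argument using the Andrews condition: if there were a second component near the principal one, then going backward in time the two components must stay disjoint, which forces the mean curvature at some point to stay small; but a small $H$ produces an exterior tangent ball of radius $\al/H$ that would swallow the principal component. Your proposal omits this step entirely. (The same connectedness argument, not the halfspace-comparison sketch you give, is what the paper uses to get the sweep-out in part (3).)

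In part (1), the interior-ball comparison is a nice alternative idea, but it only produces the bound $d(p,K_{t+\tau})\leq\al/H(p,t)$ in the regime $\tau< \al^2/(2nH(p,t)^2)$; in the complementary regime it gives a \emph{lower} bound on $H$, which is useless. Your proposed ``scaling and compactness argument'' to finish is vague, and as written it invokes parts (2) and (3) ``below'', which creates a circularity you would have to untangle. The paper avoids all of this with a clean arrival-time mean-value argument: since the boundary must travel a distance $d(p,K_{t+\tau})$ in time $\tau$, there is a point $(p',t')$ on the segment from $p$ to $K_{t+\tau}$ with $H(p',t')\leq d(p,K_{t+\tau})/\tau$, $|p-p'|\leq d(p,K_{t+\tau})$, $t'\in[t,t+\tau]$, and then the global curvature estimate at $(p',t')$ with a suitable $\Lambda$ reaches back to bound $A$ at $(p,t)$.

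Finally, in the backwardly self-similar case you cite ``Huisken's classification of mean-convex self-shrinkers with polynomial volume growth'' as a black box. This skips the actual content of the step: Huisken's argument requires integrating by parts a term of the form $\int\abs{\nabla(\abs{A}^2/H^2)}^2 e^{-\abs{x}^2/2}$, which needs a priori curvature and volume growth control that is not available for free. The paper supplies it: from the shrinker equation $H=\langle x,\nu\rangle$ and convexity (part (2)) one gets $\abs{A}\leq H\leq\abs{x}$ with analogous bounds on derivatives, and one-sided minimization gives polynomial volume growth, so Huisken's integration by parts is justified. Without this justification the citation does not close the argument.
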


\begin{proof}[Proof of Theorem \ref{thm_structure}]

\noindent (1) Given $\tau< T-t$, we can find a boundary point $(p',t')\in\D \K$ with  $\abs{p-p'}\leq d(p,K_{t+\tau})$, $t'\in [t,t+\tau]$ and $H(p',t')\leq\frac{d(p,K_{t+\tau})}{\tau}$. Then, Theorem \ref{thm-more_global_parabolic_bounds} gives universal curvature bounds.

\noindent (2) By Theorem \ref{thm-intro_convexity_estimate}, the boundary $\D K_t$ has positive semidefinite second fundamental form for every $t$. Thus, choosing any $p\in K_{T}$, the connected component $K_t^{p} \subset K_t$ containing $p$ is convex.\\
We claim that there are no other connected components, i.e. $K_t^{p}=K_t$. Indeed, suppose for any $R<\infty$ there was another component $K'_t$ in $B(p,R)$.
Going backward in time, such a complementary component $K'_{\bar t}$ would have to stay disjoint from our principal component $K^p_{\bar t}$, and thus $K'_{\bar t}$ would have to slow down. But then the Andrews condition would 
clear out our principal component $K^p_{\bar t}$; a contradiction.

\noindent (3) If the mean curvature vanishes at some point, by Theorem \ref{thm-intro_local_curvature_bounds} and the Andrews condition the flow must be a static halfspace. If not, arguing similarly as in (2), we see that the flow sweeps out all space.

Furthermore, the argument of Huisken \cite[Sec. 5]{Huisken_local_global} shows that
any backwardly self-similar $\al$-Andrews flow with $H>0$ must be a shrinking round sphere or cylinder,
provided we can justify Huisken's partial integration for the term $\int \abs{\nabla \frac{\abs{A}^2}{H^2}}^2 e^{-\abs{x}^2/2}$
without apriori assumptions on curvature and volume. To this end, recall first that the $t=-1/2$ slice of a backwardly selfsimilar solution satisfies
\begin{equation}
 H(x)=\langle x, \nu\rangle .
\end{equation}
Together with the convexity established in part (3), this shows that the curvature grows at most linearly,
\begin{equation}
 \abs{A}\leq H\leq \abs{x},
\end{equation}
and similarly for the derivatives. Also, by the one-sided minimization property (Exercise \ref{rem_one_sided_minimization}) the volume growth is at most polynomial.
Thus, Huisken's partial integration is justified in our context.
\end{proof}

Finally, let us discuss the partial regularity theorem. Recall  that the {\em parabolic Hausdorff dimension} of a subset of spacetime, $\mathcal{S}\subset \R^{n+1}\times \R$, refers to the Hausdorff dimension
with respect to the parabolic metric on spacetime
$d((x_1,t_1),(x_2,t_2))=\max({|x_1-x_2|,|t_1-t_2|^{\frac12}})$. Note that time counts as two dimensions, e.g. $\dim(\R^{n+1}\times \R)=n+3$.

\begin{theorem}[Partial regularity \cite{HK1}]\label{thm_intro_partial_regularity}
For any $\al$-Andrews flow,
the parabolic Hausdorff dimension of the singular set is at most $n-1$.
\end{theorem}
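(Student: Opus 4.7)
The plan is to combine the classification of backwardly self-similar ancient $\alpha$-Andrews flows (Theorem \ref{thm-intro_smooth_convex_til_extinct}) with a Federer--Almgren style parabolic stratification of the singular set. The bound $n-1$ will come out as the maximal number of independent spatial translation symmetries that a non-halfspace ancient $\alpha$-Andrews flow can have.

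First I would define the singular set $\mathcal{S}\subset \R^{n+1}\times \R$ as the complement of the open set on which $\partial K$ is a smoothly evolving hypersurface, and at each $X_0\in\mathcal{S}$ produce tangent flows by parabolically rescaling $\K$ about $X_0$ by $\lambda_i\to\infty$. Scale invariance of the $\alpha$-Andrews condition and its closedness under limits, together with the global curvature estimate (Theorem \ref{thm-more_global_parabolic_bounds}) applied on the complement of the extinction set, yield subsequential smooth limits that are ancient $\alpha$-Andrews flows. Huisken's monotonicity formula (Theorem \ref{thm_huisken_mon}) forces these tangent flows to be backwardly self-similar: the Gaussian density ratios become constant in the limit, so the equality case of \eqref{eq_huisken_mon} gives $\vec{H}=x^{\perp}/2t$.

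Next I would invoke Theorem \ref{thm-intro_smooth_convex_til_extinct} to conclude that every tangent flow at $X_0\in\mathcal{S}$ is either (i) a static halfspace, (ii) a shrinking round sphere, or (iii) a shrinking round cylinder $\R^j\times S^{n-j}$ with $1\leq j\leq n-1$. Option (i) is excluded at singular points: if some tangent flow at $X_0$ were a halfspace, then its Gaussian density would equal $1$, whence upper semicontinuity of the density together with the local regularity theorem (Theorem \ref{app_thm_easy_brakke}) would force $X_0$ to be regular. Consequently, at every point of $\mathcal{S}$, every tangent flow has spatial Euclidean translation symmetry group of dimension at most $n-1$, realized only by the flattest cylinder $\R^{n-1}\times S^1$. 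I would then stratify by setting, for $0\leq j\leq n-1$,
\[
\mathcal{S}_j=\{X_0\in\mathcal{S}:\text{no tangent flow at }X_0\text{ has more than }j\text{ independent spatial translation symmetries}\},
\]
so that $\mathcal{S}=\mathcal{S}_{n-1}$. The central claim is the parabolic dimension bound $\dim_{\mathrm{par}}\mathcal{S}_j\leq j$; applied at $j=n-1$ this gives the theorem.

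The hard part is the dimension reduction underlying $\dim_{\mathrm{par}}\mathcal{S}_j\leq j$. One argues by contradiction: if $\dim_{\mathrm{par}}\mathcal{S}_j>j$, a standard parabolic Hausdorff density argument produces a base point $X_0\in\mathcal{S}_j$ and a scale sequence $\lambda_i\to\infty$ along which the rescaled sets $\lambda_i(\mathcal{S}_j-X_0)$ retain a subset of parabolic dimension exceeding $j$; passing to a tangent flow $\hat{\K}$ as above, this subset survives in the singular set of $\hat{\K}$, and backward self-similarity of $\hat{\K}$ upgrades its approximate tangent directions to genuine symmetries of $\hat{\K}$. The delicate point is to verify that the resulting extra symmetries are truly \emph{spatial} translations rather than time translations or parabolic dilations; here the rigid classification of tangent flows from Theorem \ref{thm-intro_smooth_convex_til_extinct} is essential, since it rules out nontrivial time-translation-invariant tangent flows other than halfspaces, and thus forces any new symmetry of $\hat{\K}$ to enlarge the spatial translation group, contradicting $X_0\in\mathcal{S}_j$. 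Once this is in place, $\mathcal{S}=\mathcal{S}_{n-1}$ and $\dim_{\mathrm{par}}\mathcal{S}_{n-1}\leq n-1$ complete the proof.
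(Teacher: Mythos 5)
Your proof is correct and follows essentially the same approach as the paper: classify tangent flows via the structure theorem and Huisken's monotonicity formula, exclude static halfspaces at singular points via upper semicontinuity of Gaussian density and the local regularity theorem, and then obtain a contradiction by blowing up at a density point. The paper's argument is more economical: since the classification gives that every tangent flow is a plane, sphere, or cylinder $\R^j\times\bar D^{n+1-j}$ with $j\leq n-1$, each of which has singular set of parabolic Hausdorff dimension at most $n-1$, one can directly blow up at a density point of $\mathcal{S}$ assuming $\dim_{\mathrm{par}}\mathcal{S}>n-1$ and contradict this bound, with no need to set up the full Federer--Almgren stratification $\mathcal{S}_0\subset\cdots\subset\mathcal{S}_{n-1}$ or to track which extra symmetries are spatial translations versus parabolic dilations. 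Your route proves the stronger stratified statement $\dim_{\mathrm{par}}\mathcal{S}_j\leq j$ along the way (which is genuinely useful --- cf.\ the refinements of Cheeger--Haslhofer--Naber and Colding--Minicozzi cited in the paper), but for the theorem as stated the direct dimension reduction against the top stratum suffices and avoids the ``delicate point'' you flag about distinguishing spatial from non-spatial symmetries.
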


\begin{proof}[Proof of Theorem \ref{thm_intro_partial_regularity}]
By Theorem \ref{thm_structure} and the localized version of Huisken's monotonicity formula (see Lecture 2) every tangent flow must be be either (i) a static
multiplicity one plane or (ii) a shrinking sphere or cylinder $\R^j\times \bar{D}^{n+1-j}$ with $j\leq n-1$.
By Theorem \ref{thm-intro_local_curvature_bounds} the singular set $\S\subset \D\K$ consists exactly of those boundary points where no tangent flow is a static halfspace.
Assume towards a contradiction that $\dim\mathcal{S}>n-1$. Then, blowing up at a density point we obtain a tangent flow whose singular set has parabolic Hausdorff dimension bigger than $n-1$; this contradicts the above classification of tangent flows.
\end{proof}

\begin{remark}
For refined statements on the structure of the singular set, see Cheeger-Haslhofer-Naber \cite{CHN} and Colding-Minicozzi \cite{CM_structure}.
\end{remark}

\section{Mean curvature flow with surgery}

In this final lecture, we discuss our new proof for the existence of mean curvature flow with surgery for two-convex hypersurfaces.

Roughly speaking, the idea of surgery is to continue the flow through singularities
by cutting along necks, gluing in caps, and continuing the flow of the pieces; components of known geometry and topology are discarded.
Huisken and Sinestrari successfully implemented this idea for the mean curvature flow of two-convex hypersurfaces in $\R^{n+1}$,
i.e. for hypersurfaces where the sum of the smallest two principal curvatures is positive (they have to assume in addition that $n>2$); this was the culmination of a series of long papers \cite{huisken-sinestrari1,huisken-sinestrari2,huisken-sinestrari3}.

Our new approach is comparably short and simple,\footnote{As least as simple as the extremely technical nature of the subject allows.} and also works for $n=2$ (this was also proved by Brendle-Huisken \cite{BH} using our local curvature estimate and another estimate of Brendle \cite{Brendle_inscribed}, see also \cite{HK_inscribed}). The key are new a priori estimates, in a local and flexible setting. We derive them for a class of flows that we call $(\al,\delta)$-flows.

We fix a constant $\mu\in [1,\infty)$, and a large enough constant $\Gamma<\infty$.

\begin{definition}[\cite{HK2}]\label{def_alphadelta}
An \emph{$(\alpha,\delta)$-flow} $\K$ is a collection of finitely many smooth $\al$-Andrews flows $\{K_t^i\subseteq U\}_{t\in[t_{i-1},t_{i}]}$ ($i=1,\ldots,k$; $t_0<\ldots< t_k$) in an open set $U\subseteq \R^{n+1}$,
such that
\begin{enumerate}
\item for each $i=1,\ldots,k-1$, the final time slices of some collection of disjoint strong $\delta$-necks
 are replaced by pairs of standard caps as described in Definition \ref{def_replace},
 giving a domain $K^\sharp_{t_{i}}\subseteq K^{i}_{t_{i}}=:K^-_{t_{i}}$.
\item the initial time slice of the next flow, $K^{i+1}_{t_{i}}=:K^+_{t_{i}}$, is obtained from $K^\sharp_{t_{i}}$ by discarding some connected components.
\item there exists $s_\sharp=s_\sharp(\K)>0$, which depends on $\K$, such that all necks in item (1) have radius $s\in[\mu^{-1/2}s_\sharp,\mu^{1/2} s_\sharp]$.
\end{enumerate}
\end{definition}

\begin{remark}
To avoid confusion, we emphasize that the word `some' allows for the empty set,
i.e. some of the inclusions $K_{t_i}^+\subseteq K_{t_i}^\sharp\subseteq K_{t_i}^-$ could actually be equalities. In other words, there can be some times $t_i$ where effectively only one of the steps (1) or (2) is carried out.
Also, the flow can become extinct, i.e. we allow the possibility that $K^{i+1}_{t_{i}}=\emptyset$.
\end{remark}

\begin{definition}[\cite{HK2}]
A \emph{standard cap} is a smooth convex domain $K^{\textrm{st}}\subset \R^{n+1}$ that coincides with a solid round half-cylinder of radius $1$ outside a ball of radius $10$.
\end{definition}

\begin{definition}[\cite{HK2}]
We say that an $(\al,\de)$-flow $\K=\{K_t\subseteq U\}_{t\in I}$ has a \emph{strong $\delta$-neck} with center $p$ and radius $s$ at time $t_0\in I$, if
$\{s^{-1}\cdot(K_{t_0+s^2t}-p)\}_{t\in(-1,0]}$ is $\delta$-close in $C^{\lfloor 1/\delta\rfloor}$ in $B_{1/\delta}^U\times (-1,0]$ to the evolution of a solid round cylinder $\bar{D}^{n}\times \R$ with radius $1$ at $t=0$.
\end{definition}

\begin{definition}[\cite{HK2}]\label{def_replace}
We say that the final time slice of a strong $\delta$-neck ($\delta\leq\tfrac{1}{10\Gamma}$) with center $p$ and radius $s$ is \emph{replaced by a pair of standard caps},
if the pre-surgery domain $K^-\subseteq U$ is replaced by a post-surgery domain $K^\sharp\subseteq K^-$ such that:
\begin{enumerate}
\item the modification takes places inside a ball $B=B(p,5\Gamma s)$.
 \item there are uniform curvature bounds
$$\sup_{\D K^\sharp\cap B}\abs{\nabla^\ell A}\leq C_\ell s^{-1-\ell}\qquad (\ell=0,1,2,\ldots).$$
 \item if $B\subseteq U$, then for every point $p_\sharp\in \partial K^\sharp\cap B$ with $\lambda_1(p_\sharp)< 0$, there is a point
 $p_{-}\in\partial K^{-}\cap B$ with $\frac{\lambda_1}{H}(p_{-})\leq\frac{\lambda_1}{H}(p_{\sharp})$.
 \item if $B(p,10\Gamma s)\subseteq U$, then $s^{-1}\cdot(K^\sharp-p)$ is $\delta'(\delta)$-close in $B(0,10\Gamma)$ to a pair of disjoint standard caps,
that are at distance $\Gamma$ from the origin, where $\delta'(\delta)\to 0$ as $\delta\to 0$.
\end{enumerate}
\end{definition}

\begin{theorem}[Local curvature estimate \cite{HK2}]
There exist $\bar{\delta}=\bar{\delta}(\al)>0$, $\rho=\rho(\al)>0$ and $C_\ell=C_\ell(\alpha)<\infty$ ($\ell=0,1,2,\ldots$) with the following property.
If $\K$ is an $(\al,\delta)$-flow ($\delta\leq\bar{\de}$) in a parabolic ball $P(p,t,r)$ centered at a point 
$p\in \D K_t$ with $H(p,t)\leq r^{-1}$, then
\begin{equation}\label{eqn_loccurv}
 \sup_{P(p,t,\rho r)\cap \partial \K}\abs{\nabla^\ell A}\leq C_\ell r^{-1-\ell}.
\end{equation}
\end{theorem}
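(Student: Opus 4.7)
The plan is to mimic the contradiction-plus-rescaling proof of the smooth local curvature estimate (Theorem \ref{thm-intro_local_curvature_bounds}), adapting each step so that it remains valid in the presence of surgeries. Suppose the conclusion fails: then after parabolic rescaling to arrange $r=1$, there is a sequence of $(\al,\delta_j)$-flows $\K^j$ defined in $P(0,0,j)$ with $\delta_j\leq \bar\delta$, $H(0,0)\leq j^{-1}$, and $\sup_{P(0,0,1)\cap \partial \K^j}|A|\geq j$. The same point-selection iteration as in the proof of Theorem \ref{thm-intro_local_curvature_bounds} produces boundary points $Y_j\in P(0,0,3/4)\cap \partial\K^j$ with $Q_j:=|A|(Y_j)\to\infty$ satisfying $|A|\leq 2Q_j$ throughout $P(Y_j,j/(10Q_j))\cap \partial\K^j$; here the supremum is taken over all boundary points of the $(\al,\delta)$-flow, smooth and post-surgery alike.

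I would then parabolically rescale $\K^j$ around $Y_j$ by the factor $Q_j$, obtaining $(\al,\delta_j)$-flows $\hat\K^j$ with $|A|(0,0)=1$ and $|A|\leq 2$ on $P(0,0,j/10)\cap \partial \hat\K^j$. Under this rescaling the surgery scale becomes $Q_j s_\sharp^j$; because Definition \ref{def_replace}(2) gives $|A|\sim s^{-1}$ at every surgery cap, the bound $|A|\leq 2Q_j$ forces $Q_j s_\sharp^j\geq c(\al)>0$, so rescaled surgeries either recede to infinity or occur at a definite positive scale. The $\al$-Andrews condition is preserved through the rescaling and across surgery times, and since each surgery only shrinks $K_t$, the avoidance principle against smooth external barriers persists. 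Following Claim \ref{claim_halfspace}, large exterior balls placed above the hyperplane $\{x_{n+1}=0\}$ must develop an interior contact at a point of small mean curvature, and the $\al$-Andrews exterior tangent ball then flattens the slices over arbitrarily large regions; combined with admissibility, this yields $\hat\K^j\to \{x_{n+1}\leq 0\}\times(-\infty,0]$ in the pointed Hausdorff topology.

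The one-sided minimization property of Exercise \ref{rem_one_sided_minimization} continues to hold (surgeries only decrease $K_t$, so comparison with exterior competitors goes through), delivering the density bound $|\partial \hat K_t^j\cap B(x,r)|\leq (1+\eps)\omega_n r^n$ and hence, via the localized monotonicity inequality \eqref{app_loc_mon}, Gaussian density ratios at $(0,0)$ tending to $1$. The principal obstacle is then to apply the local regularity theorem (Theorem \ref{app_thm_easy_brakke}), which requires a \emph{smooth} flow in a parabolic ball: I would neutralize this by fixing $\bar\delta=\bar\delta(\al)$ small enough that no strong $\delta$-neck can coexist with a Gaussian density ratio so close to $1$, since a $\delta$-neck is modelled on a cylinder whose Gaussian density exceeds $1$ by a definite amount. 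This rules out surgeries in $P(0,0,1/2)$ for large $j$, so Theorem \ref{app_thm_easy_brakke} applies to the resulting smooth rescaled flow and yields $\limsup_j\sup_{P(0,0,1)\cap \partial \hat\K^j}|A|=0$, contradicting $|A|(0,0)=1$. The higher-order derivative bounds $|\nabla^\ell A|\leq C_\ell r^{-1-\ell}$ follow from standard parabolic Schauder theory once the $C^0$-bound is in place.
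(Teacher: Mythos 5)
The central gap is your use of point selection, which you import from the proof of Theorem~\ref{app_thm_easy_brakke} but which does \emph{not} appear in the paper's proof of the smooth $\alpha$-Andrews version, Theorem~\ref{thm-intro_local_curvature_bounds}. There it is avoided for a reason: the whole argument hinges on the halfspace convergence of Claim~\ref{claim_halfspace}, which in turn relies crucially on $H(0,0)\leq j^{-1}\to 0$ -- this is what makes the $\alpha$-Andrews interior ball at the basepoint have radius $\geq \alpha j\to\infty$, yielding admissibility and convergence of the interior to a halfspace. After you point-select to $Y_j$ with $\abs{A}(Y_j)=Q_j$ and rescale by $Q_j$, the rescaled basepoint has $\abs{\hat A}(0,0)=1$, so $\hat H(0,0)$ is merely bounded, not small; the Andrews interior ball of the rescaled flow has bounded radius, and Claim~\ref{claim_halfspace} no longer applies. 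You cannot conclude that $\hat\K^j$ converges to a static halfspace, and the density bound $\abs{\D\hat K^j_t\cap B(x,r)}\leq (1+\eps)\omega_n r^n$ does not follow.

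This matters because your mechanism for excluding microscopic surgeries -- namely that the cap curvature $\sim s^{-1}$ together with the point-selection bound $\abs{A}\leq 2Q_j$ forces $Q_j s^j_\sharp\geq c>0$ -- is exactly what you are using to bypass the \emph{core} of the paper's argument, which is ruling out microscopic surgeries by a careful accounting of the cumulative error that surgeries introduce into Huisken's monotonicity inequality (see the Remark after Theorem~\ref{thm_globcurvsurg}). The correct structure, following the paper, is: work with the original sequence $\K^j$ (so $H(0,0)\to 0$ and halfspace convergence is legitimate), then split into (i) no nearby surgeries (apply Theorem~\ref{thm-intro_local_curvature_bounds}), (ii) macroscopic surgery scales (apply pseudolocality), and (iii) microscopic surgery scales, where one must compare the large Gaussian density $\gtrsim\sqrt{2\pi/e}$ forced at a surgery neck against the density $\approx 1$ obtained from the halfspace limit at earlier times, and show that the cumulative surgery error in the monotonicity inequality is too small to bridge this gap. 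Your proposal neither performs this case split nor controls the monotonicity error terms; the invocations of one-sided minimization and of \eqref{app_loc_mon} also require nontrivial extension to $(\al,\de)$-flows, where the smooth mean-convex foliation used in Exercise~\ref{rem_one_sided_minimization} and the clean monotonicity inequality both acquire surgery corrections that must be estimated rather than waved away.

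A secondary imprecision: Definition~\ref{def_replace}(2) gives only the upper bound $\abs{\nabla^\ell A}\leq C_\ell s^{-1-\ell}$ on the surgery cap, not $\abs{A}\sim s^{-1}$. The lower bound is true (the standard cap closes off the cylinder, so some point has $\abs{A}\gtrsim s^{-1}$), but it comes from the geometry of standard caps and the closeness condition in Definition~\ref{def_replace}(4), not from the curvature bounds you cite.
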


\begin{theorem}[Convexity estimate  \cite{HK2}]
For all $\eps>0$, there exist $\bar{\delta}=\bar{\delta}(\alpha)>0$ and $\eta=\eta(\eps,\al)<\infty$ with the following property.
If $\K$ is an $(\al,\delta)$-flow ($\delta\leq\bar{\delta}$) defined in a parabolic ball  $P(p,t,\eta\, r)$ centered at a point 
$p\in \D K_t$ with $H(p,t)\leq r^{-1}$, then
$\la_1(p,t)\geq -\eps r^{-1}$.
\end{theorem}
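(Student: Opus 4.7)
The plan is to mirror the contradiction argument used for the smooth convexity estimate (Theorem \ref{thm-intro_convexity_estimate}), using the local curvature estimate for $(\alpha,\delta)$-flows in place of Theorem \ref{thm-intro_local_curvature_bounds}, and exploiting the surgery axioms of Definition \ref{def_replace} to rule out interference from cap insertions near the base point.

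Fix $\alpha$. The $\alpha$-Andrews condition gives $\lambda_1/H \geq -1/\alpha$, so the assertion holds at $\eps = 1/\alpha$. Let $\eps_0 \in (0, 1/\alpha]$ be the infimum of $\eps$'s for which the statement holds, and assume toward a contradiction that $\eps_0 > 0$. Then one extracts a sequence of $(\alpha, \delta_j)$-flows $\K^j$ with $\delta_j \to 0$, defined in $P(0, 0, j)$, with $(0, 0) \in \partial K^j_0$, $H(0, 0) \leq 1$, and $\lambda_1(0, 0) \to -\eps_0$. Since $\lambda_1(0,0) < 0$ in the limit, the base point cannot lie in a post-surgery cap region (which is convex, giving $\lambda_1 \geq 0$) or in a $\delta_j$-neck undergoing surgery (which is nearly cylindrical, hence has $\lambda_1$ near $0$); combined with the uniform curvature bounds given by the local curvature estimate on $P(0, 0, \rho) \cap \partial \K^j$, this forces any surgery scale near $(0,0)$ to be bounded below, so surgeries in a smaller parabolic ball $P(0, 0, r)$ with $r \in (0, \rho)$ are sparse and uniformly separated from the base point. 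Passing to a subsequence, $\K^j$ then converges smoothly on $P(0, 0, r)$ to a smooth $\alpha$-Andrews mean curvature flow $\K^\infty$ with $H(0, 0) = 1$ and $\lambda_1(0, 0) = -\eps_0$.

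The rest of the argument then follows the smooth case verbatim. For every $(p, t) \in \partial \K^\infty \cap P(0, 0, r)$, approximate by $(p_j, t_j) \in \partial \K^j$ with $(p_j, t_j) \to (p, t)$. Given $\eps > \eps_0$, for $j$ large we have $\delta_j \leq \bar{\delta}(\alpha)$ and $\K^j$ is defined on $P(p_j, t_j, \eta H^{-1}(p_j, t_j))$ with $\eta = \eta(\eps, \alpha)$, so by the definition of $\eps_0$, $\lambda_1/H(p_j, t_j) \geq -\eps$. Letting $j \to \infty$ and then $\eps \to \eps_0$ yields $\lambda_1/H \geq -\eps_0$ throughout $\partial \K^\infty \cap P(0, 0, r)$, with equality at the interior boundary point $(0, 0)$. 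The equality case of the strong maximum principle for the supersolution $\lambda_1/H$ on the smooth flow $\K^\infty$ then forces a local splitting of the hypersurface, so the smallest principal curvature vanishes at $(0, 0)$; but $\lambda_1(0, 0) = -\eps_0 < 0$, a contradiction.

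The main obstacle is the extraction of the smooth limit, because unlike in the $\alpha$-Andrews setting, curvature bounds alone do not rule out surgery accumulating in $P(0, 0, \rho)$. The key ingredients that do rule this out are: (i) $\delta_j \to 0$, so any surgery neck in a limit would be a perfect cylinder with $\lambda_1 = 0$; (ii) convexity of the standard caps, which gives $\lambda_1 \geq 0$ in cap regions; and (iii) property (3) of Definition \ref{def_replace}, which ensures that surgery cannot improve the pinching ratio, so a post-surgery point with very negative $\lambda_1/H$ can be traded for a pre-surgery point of at least as negative pinching. Together with the new local curvature estimate, these force the flow to be smooth in a parabolic neighborhood of the base point for $j$ large, at which point the smooth maximum-principle argument applies.
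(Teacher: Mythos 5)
Your proposal follows essentially the scheme the paper indicates: the remark after Theorem \ref{thm_globcurvsurg} says the convexity estimate for $(\al,\de)$-flows is obtained by running the smooth-case argument (Theorem \ref{thm-intro_convexity_estimate}) with the local curvature estimate for $(\al,\de)$-flows in place of Theorem \ref{thm-intro_local_curvature_bounds}, and your contradiction set-up, smooth-limit extraction, and strong maximum principle step all match that blueprint. Since the paper itself only sketches this at the level of a remark, there is no more detailed proof in the text to measure against.

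The one place that needs tightening is the passage to a smooth limit. You correctly name the relevant ingredients --- the curvature bound from the local curvature estimate forces any surgery scale inside $P(0,0,\rho)$ to be bounded below, $\de_j\to 0$ makes surgery necks nearly perfectly cylindrical, caps are convex, and property (3) of Definition \ref{def_replace} controls the pinching ratio across surgeries --- but as written they are listed rather than assembled into an argument. In particular, the claim that surgeries are ``sparse and uniformly separated from the base point'' does not follow merely from the scale lower bound and $\la_1(0,0)<0$: a surgery is a topological discontinuity, so even with bounded curvature one must still show that, after shrinking $r$ and passing to a subsequence, no surgery time in $(-r^2,0]$ affects a definite spacetime neighborhood of $(0,0)$, or else the limit is not a smooth MCF. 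This is precisely where property (3) of Definition \ref{def_replace} does the real work: if the point realizing the infimum of $\la_1/H$ lies on the post-surgery side of a surgery time, that property lets you trade it for a pre-surgery point with pinching at least as negative, and iterating this pushes the near-infimizing point off the surgery regions entirely. You mention this trading property last and in passing; it should instead be the spine of the smooth-limit step, with the $\de_j\to 0$ and cap-convexity observations serving as the easy exclusions.
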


\begin{theorem}[Global curvature estimate  \cite{HK2}]\label{thm_globcurvsurg}
For all $\Lambda<\infty$,
there exist $\bar{\delta}=\bar{\delta}(\alpha)>0$, $\eta=\eta(\alpha,\Lambda)<\infty$ and $C_\ell=C_\ell(\alpha,\Lambda)<\infty$ ($\ell=0,1,2,\ldots$) with the following property.
If $\K$ is an $(\al,\delta)$-flow ($\delta\leq\bar{\delta}$) in a parabolic ball  $P(p,t,\eta\, r)$ centered at a boundary point 
$p\in \D K_t$ with $H(p,t)\leq r^{-1}$, then
\begin{equation}\label{eqn_globcurv}
 \sup_{P(p,t,\Lambda r)\cap\partial \K'}\abs{\nabla^\ell A}\leq C_\ell r^{-1-\ell}.
\end{equation}
\end{theorem}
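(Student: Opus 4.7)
The plan is to run the same contradiction-and-blowup argument that established Theorem~\ref{thm-more_global_parabolic_bounds} in the smooth case, substituting the two previously stated surgery versions of the local curvature and convexity estimates for their smooth counterparts, and carefully arguing that surgery modifications can be dismissed in all limits once $\de\le\bar\de(\al)$ is small enough. There are three main stages: contradiction and normalization; extraction of a smooth convex limit on a supremal parabolic region; and derivation of a non-flat convex cone as a tangent object to this limit, which cannot arise under mean curvature flow.

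After rescaling so that $r=1$, I would assume for contradiction a sequence of $(\al,\de_j)$-flows $\K^j$ defined in $P(0,0,\eta_j)$ with $\de_j\to 0$, $\eta_j\to\infty$, $(0,0)\in\D\K^j$, $H(0,0)\le 1$, but with some $|\nabla^\ell A|$ unbounded on $P(0,0,\Lambda)\cap\D\K^j$. By the surgery local curvature estimate, this forces $H$ itself to blow up along a sequence of boundary points, so it is enough to rule out $H$-blowup. I would then let $R_0\in[\rho,\Lambda]$ be the supremum of radii $R$ for which, along a subsequence, uniform $C^k$-bounds on the derivatives of $A$ hold on $P(0,0,R)\cap\D\K^j$; such $R_0$ exists, since the surgery local curvature estimate already gives bounds on $P(0,0,\rho)$. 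On $P(0,0,R_0)$ I would extract a pointed smooth limit $\K^\infty$: surgery necks in $\K^j$ have scale comparable to $(s_\sharp^j)^{-1}$ by condition~(3) of Definition~\ref{def_alphadelta}, so in a region of uniformly bounded curvature the sequence $s_\sharp^j$ stays bounded away from $0$, and since $\de_j\to 0$ any surviving surgery neck in the limit would be an exact round cylinder of definite radius that cannot embed in a convex $\al$-Andrews domain without violating the maximality of the interior ball radius. Thus $\K^\infty$ is a smooth $\al$-Andrews flow and, by the surgery convexity estimate, has convex time slices.

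Because $R_0<\Lambda$ by assumption, I can pick boundary points $Y_j=(q_j,s_j)\in\D\K^j$ approaching the parabolic sphere of radius $R_0$ with $Q_j=H(Y_j)\to\infty$. A point-selection of the type used in the proof of Theorem~\ref{app_thm_easy_brakke}, combined with the surgery local curvature estimate, yields nearby points at which a $Q_j$-parabolic rescaling produces uniform curvature bounds on balls of radius $j/10$. By the same surgery-dismissal argument as before, the rescaled flows subconverge smoothly to an ancient $\al$-Andrews flow with $H=1$ at the basepoint, to which Theorem~\ref{thm_structure} applies: the limit is convex, smooth up to extinction, has strictly positive mean curvature, and sweeps out all of space. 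Chasing the blowup basepoints back into $\K^\infty$ on $B(0,R_0)$, the final time slice of this blowup is realized as an iterated tangent to the convex limit body $K_0^\infty$ at a boundary point of the sphere $\D B(0,R_0)$, and I would identify it as a non-flat convex cone.

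The main obstacle is the last step --- producing the cone and ruling it out --- which was already the crux of the smooth-case proof in \cite{HK1}. Ruling it out uses Theorem~\ref{thm_structure}: any backwardly self-similar ancient $\al$-Andrews flow with $H>0$ is a shrinking round sphere or cylinder, so no nontrivial convex cone can occur as a time slice. This contradicts the $H$-blowup near radius $R_0$ and forces $R_0=\Lambda$. The additional, surgery-specific technical burden is verifying at each stage that surgery events cannot survive the smooth-limit extraction; this is handled by choosing $\bar\de(\al)$ small enough that any $\de$-neck in the limit would contradict the $\al$-Andrews condition, together with condition~(3) of Definition~\ref{def_alphadelta} pinning the surgery scale of each flow to a single quantity $s_\sharp^j$.
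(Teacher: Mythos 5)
Your overall scheme is the right one and matches what the paper indicates: once the surgery versions of the local curvature estimate and the convexity estimate are in hand, the paper says the global curvature estimate is proved by ``follow[ing] the scheme discussed in the previous lectures,'' i.e.\ the supremal-radius/blowup/non-flat-cone contradiction from the proof of Theorem~\ref{thm-more_global_parabolic_bounds}. You correctly replicate this scheme, correctly invoke Theorem~\ref{thm_structure} on the ancient blowup limit, and correctly identify the production and exclusion of a non-flat convex cone as the crux inherited from the smooth case.

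However, there is a genuine gap in the surgery-dismissal step, which you use twice (once to make $\K^\infty$ on $P(0,0,R_0)$ smooth, and once to make the rescaled ancient limit smooth). You argue that a surviving surgery neck in the limit ``would be an exact round cylinder of definite radius that cannot embed in a convex $\al$-Andrews domain without violating the maximality of the interior ball radius.'' This is not a valid obstruction: a solid round cylinder is itself a convex $\al$-Andrews domain (with $\al$ as large as $n-1$), and arbitrarily long cylindrical regions of definite radius do embed in convex $\al$-Andrews domains, so nothing about the Andrews condition or ``maximality of the interior ball radius'' rules them out. Moreover, at the stage where you invoke this for $\K^\infty$, you have not yet established convexity of the time slices, so the argument would be circular even if the geometric claim were correct. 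The actual mechanism must be different: surgeries occurring at a definite spacetime scale inside the compact limiting region would produce a jump discontinuity of the domain at the surgery time, incompatible with the smooth convergence you are claiming; and you need to separately handle the regime where the rescaled surgery radii $Q_j s_\sharp^j$ stay bounded (the paper explicitly flags the pseudolocality theorem for the ``macroscopic'' surgery regime) versus when they go to zero (handled by the Huisken-density core argument already packaged into the surgery local curvature estimate) versus when they go to infinity (surgeries escape). Condition~(3) of Definition~\ref{def_alphadelta} is indeed the tool that pins the surgery scale to a single quantity $s_\sharp^j$ per flow, so your instinct there is right, but the trichotomy on $Q_j s_\sharp^j$ and the role of pseudolocality need to be spelled out in place of the incorrect ``cannot embed'' reasoning.
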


\begin{remark} The presence of surgeries makes the proof of the local curvature estimate quite delicate. Rougly speaking, the main idea is as follows.
Arguing by contradiction, we get a sequence of flows on larger and larger parabolic balls where the curvature goes to zero at the basepoint but blows up at some nearby point.
We first split off the two easy cases that there are no nearby surgeries or surgeries at macroscopic scales, which can be dealt with by applying the local curvature estimate from the previous section and the pseudolocality theorem for mean curvature flow, respectively.
The core of the proof is then to rule out surgeries at microscopic scales.
We do this as follows: At a surgery neck the value of the Huisken density is close to the value of the cylinder.
However, since the mean curvature at the basepoint goes to zero, using a halfspace convergence argument and one-sided minimization, we can show that the Huisken density is close to $1$ further back in time.
Finally, analyzing the contributions from surgeries in different regimes, we prove that the cumulative error in Huisken's monotonicity inequality due to surgeries goes to zero, and conclude that microscopic surgeries cannot occur.
In the proof of the convexity estimate and the global curvature estimate, we then follow the scheme discussed in the previous lectures.
\end{remark}

We now turn to the discussion of the existence theory.

\begin{definition}[\cite{HK2}]\label{def_initialdata}
Let $\Balpha=(\al,\beta,\gamma)\in(0,n-1)\times (0,\tfrac{1}{n-1})\times (0,\infty)$.
A smooth compact two-convex domain $K_0\subset \R^{n+1}$ is called an \emph{$\Balpha$-controlled initial condition},
if it satisfies the $\alpha$-Andrews noncollapsing-condition and the inequalities $\lambda_1+\lambda_2\geq \beta H$ and $H\leq \gamma$.
\end{definition}

\begin{remark}\label{remark_controlled}
Note that every smooth compact $2$-convex domain is a controlled initial condition for some parameters $\alpha,\beta,\gamma>0$.
\end{remark}

For us, a mean curvature flow with surgery is an $(\al,\de)$-flow with $(\al,\beta,\gamma)$-controlled initial data, subject to the following additional conditions.
First, the flow is \emph{$\beta$-uniformly two-convex}, i.e. $\lambda_1+\lambda_2\geq \beta H$.
Besides the neck-quality $\delta>0$, we have three curvature-scales $H_{\textrm{trig}}> H_{\textrm{neck}} > H_{\textrm{th}} > 1$, to which we refer as the trigger-, neck- and thick-curvature. 
The surgeries are done at times $t$ when the maximum of the mean curvature hits $H_{\textrm{trig}}$.
They are performed on a minimal disjoint collection of solid $\de$-necks of curvature $H_{\textrm{neck}}$ that separate the trigger part $\{H=H_{\textrm{trig}}\}$ from the thick part $\{H\leq H_{\textrm{th}}\}$ in $K_t^-$, and the high curvature components are discarded.
Finally, we impose the condition that surgeries are done more and more precisely, if the surgery-necks happen to be rounder and rounder.
We call our flows with surgery $(\Balpha,\de,\mathbb{H})$-flows, and the precise definition is as follows.

\begin{definition}[\cite{HK2}]\label{def_MCF_surgery}
An \emph{$(\Balpha,\de,\mathbb{H})$-flow}, $\mathbb{H}=(H_{\textrm{th}},H_{\textrm{neck}},H_{\textrm{trig}})$, is an $(\al,\de)$-flow $\{K_t\subset \R^{n+1}\}_{t\geq 0}$  with $\lambda_1+\lambda_2\geq \beta H$, and
with $\Balpha=(\al,\beta,\gamma)$-controlled initial condition $K_0\subset \R^{n+1}$ such that
\begin{enumerate}
\item $H\leq H_{\textrm{trig}}$ everywhere, and
surgery and/or discarding occurs precisely at times $t$ when $H=H_{\textrm{trig}}$ somewhere.
\item The collection of necks in item (1) of Definition \ref{def_alphadelta} is a minimal collection of solid $\de$-necks of curvature $H_{\textrm{neck}}$ which
separate the set $\{H=H_{\textrm{trig}}\}$ from $\{H\leq H_{\textrm{th}}\}$ in the domain
$K_t^-$.
\item $K_t^+$ is obtained from $K_t^\sharp$ by discarding precisely those connected components with $H>H_{\textrm{th}}$ everywhere.
In particular, of each pair of facing surgery caps precisely one is discarded. 
\item If a strong $\delta$-neck from item (2) also is a strong $\hat{\delta}$-neck for some $\hat{\delta}<\delta$, then property (4) of Definition \ref{def_replace} also holds with $\hat{\delta}$ instead of $\delta$.
\end{enumerate}
\end{definition}

\begin{remark}\label{remark_extinct}
By comparison with spheres every $(\Balpha,\de,\mathbb{H})$-flow becomes extinct after some finite time $T$, i.e. satisfies $K_t=\emptyset$ for all $t>T$.
\end{remark}

Our main existence theorem is the following.

\begin{theorem}[Existence of MCF with surgery \cite{HK2}]\label{thm_main_existence}
There are constants $\ol{\de}=\ol{\de}(\Balpha)>0$ and $\Theta(\de)=\Theta(\Balpha,\de)<\infty$ ($\delta\leq\bar{\de}$) with the following significance.
If $\de\leq\bar{\de}$ and $\mathbb{H}=(H_{\textrm{trig}},H_{\textrm{neck}},H_{\textrm{th}})$ are positive numbers with
${H_{\textrm{trig}}}/{H_{\textrm{neck}}},{H_{\textrm{neck}}}/{H_{\textrm{th}}},H_{\textrm{neck}}\geq \Theta(\de)$,
then there exists an $(\Balpha,\de,\mathbb{H})$-flow $\{K_t\}_{t\in[0,\infty)}$ for every $\Balpha$-controlled initial domain $K_0$.  
\end{theorem}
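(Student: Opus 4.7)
The plan is to construct the flow iteratively, alternating smooth mean curvature flow with discrete surgeries at times when $\max H$ first reaches $H_{\textrm{trig}}$, and to check that with the parameter choices in the theorem, the construction can be continued until extinction. Between surgeries the flow is a smooth $\alpha$-Andrews flow that is $\beta$-uniformly two-convex; both conditions are preserved along smooth MCF by Theorem \ref{thm_andrews} and the tensor maximum principle (applied to $\lambda_1 + \lambda_2 \geq \beta H$). The issue is therefore entirely concentrated at surgery times.

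The technical heart of the argument is a \emph{canonical neighborhood theorem}: for $\delta \leq \bar{\delta}(\Balpha)$ and $\Theta(\delta)$ large enough, any boundary point $(p,t)$ of an $(\Balpha,\delta,\mathbb{H})$-flow with $H(p,t) \geq H_{\textrm{neck}}$ either lies on a strong $\delta$-neck or lies in a (sphere- or cap-like) convex region bounded by such necks. I would prove this by contradiction: a counterexample sequence, rescaled by $H$ at the offending point, subconverges using the global curvature estimate (Theorem \ref{thm_globcurvsurg}) and the convexity estimate to a smooth ancient $\alpha$-Andrews flow that is $\beta$-uniformly two-convex. The structure theorem (Theorem \ref{thm_structure}) gives convex time slices and classifies backwardly self-similar limits as shrinking spheres or cylinders; the two-convexity $\lambda_1 + \lambda_2 \geq \beta H$ rules out planar factors of dimension $\geq 2$ in any cylindrical limit. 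Standard Hamilton-type splitting for non-self-similar limits then forces either a shrinking $S^n$, a shrinking $S^{n-1} \times \R$, or a rotationally symmetric convex translator, each of which fits the neck/cap model, a contradiction.

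With the canonical neighborhood theorem in hand, the surgery step proceeds as follows. At the first time $t_1$ where $\max H = H_{\textrm{trig}}$, every path in $K_{t_1}^-$ from $\{H = H_{\textrm{trig}}\}$ to $\{H \leq H_{\textrm{th}}\}$ must cross the level set where $H = H_{\textrm{neck}}$; by the canonical neighborhood theorem every crossing point lies on a strong $\delta$-neck. Extracting a minimal finite disjoint collection of such necks (there are finitely many by compactness), I replace each by a pair of standard caps as in Definition \ref{def_replace}, and then discard exactly the connected components of $K_{t_1}^\sharp$ on which $H > H_{\textrm{th}}$ everywhere. Taking $H_{\textrm{trig}}/H_{\textrm{neck}}$ and $H_{\textrm{neck}}/H_{\textrm{th}}$ sufficiently large makes all three sets—trigger, neck, thick—non-interacting and makes the collection genuinely separating. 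Preservation of the structural invariants across the surgery is then checked directly: $\alpha$-noncollapsing persists since the standard cap is convex and matches the round cylinder outside a ball of radius $10$ (so inscribed/exterior balls of radius $\alpha/H$ persist up to a universal loss absorbed into $\bar\delta$); $\beta$-uniform two-convexity persists by choosing the standard cap strictly more two-convex than $\beta$; and the post-surgery slice $K_{t_1}^+$ satisfies $H \leq H_{\textrm{trig}}$ since its curvature is bounded in terms of $H_{\textrm{neck}}$.

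Iterating, one then restarts smooth MCF from $K_{t_1}^+$, runs until the next time $\max H = H_{\textrm{trig}}$, performs surgery, and so on. To see that the construction continues and is well defined for all times, I would argue that only finitely many surgeries occur in any compact time interval: each surgery (or each discarded component) removes a definite amount of $n$-area, specifically at least a universal constant times $H_{\textrm{neck}}^{-n}$, while the area between surgeries is non-increasing by \eqref{eq_areamon} and only finitely much area can be added by the cap replacements. Extinction in finite time follows from comparison with a large enclosing sphere. The \textbf{main obstacle} is the canonical neighborhood theorem together with the calibration of $\Theta(\delta)$: one must simultaneously ensure (i) that blowup limits are classified finely enough to produce genuine $\delta$-necks (not merely approximate ones) at the chosen scale $H_{\textrm{neck}}$, and (ii) that after surgery the post-surgery domain still satisfies the $\alpha$-Andrews and $\beta$-two-convexity hypotheses needed to re-apply the a priori estimates on the next smooth interval. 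Getting these parameter dependencies mutually consistent, without any circular reasoning, is the delicate core of the proof.
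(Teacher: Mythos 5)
Your proposal takes a genuinely different route from the paper. The paper does not first prove a canonical neighborhood theorem and then construct the flow iteratively using it; instead it runs the whole iteration inside a single contradiction argument: one assumes a sequence $\K^j$ of $(\Balpha,\de,\mathbb{H}_j)$-flows with degenerating parameters can only be defined on finite maximal intervals $[0,T_j]$, and at time $T_j$ one shows directly that a suitable separating collection of strong $\de$-necks exists, contradicting maximality. The key lemma is \emph{not} the canonical neighborhood theorem (Theorem~\ref{thm_can_nbd}, which the paper states and proves separately as a complement, not a prerequisite) but a more elementary separation lemma: the thick part $\{H\leq H_{\textrm{th}}\}$ and the trigger part $\{H= H_{\textrm{trig}}\}$ in $K^j_{T_j}$ are separated by a union of balls of radius comparable to $H_{\textrm{neck}}^{-1}$ centered at boundary points with $H=H_{\textrm{neck}}$, and a \emph{minimal} separating subcollection of such balls is then shown to consist of centers of strong $\hat\delta$-necks for every $\hat\delta$. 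This ``minimal separating collection consists of necks'' device is what lets the paper bypass a full Perelman-style canonical neighborhood classification up front, and is the main streamlining relative to your scheme.

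Your route should work in principle, but has gaps as sketched. First, when you rescale at a counterexample point to prove your canonical neighborhood theorem, the blowup limit may contain surgeries (if surgery scales are comparable to the blowup scale), so the limit is \emph{not} in general a smooth ancient $\al$-Andrews flow; this is precisely why Theorem~\ref{thm_can_nbd} includes alternative (b), the evolution of a standard cap preceded by a cylinder, and your compactness argument must handle that case separately rather than assume a smooth ancient limit. Second, your claimed classification of non-self-similar ancient limits via ``standard Hamilton-type splitting'' into $S^n$, $S^{n-1}\times\R$, or a rotationally symmetric translator is both stronger than what the surgery argument needs ($\eps$-closeness to a canonical model suffices) and not actually standard with the tools at hand; the structure theorem (Theorem~\ref{thm_structure}) only classifies the \emph{backwardly self-similar} flows, and a genuine classification of two-convex ancient flows requires substantially more. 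Third, the finiteness-of-surgeries count is more robustly phrased in terms of volume rather than area: $K_{t_i}^{\sharp}\subseteq K_{t_i}^{-}$ gives a definite volume drop at each discarding step (each discarded high-curvature component contains an Andrews ball), whereas area is not a priori monotone across a cap replacement.
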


Our existence result is complemented by the following theorem.

\begin{theorem}[Canonical neighborhood theorem \cite{HK2}]\label{thm_can_nbd}
For all $\eps>0$, there exist $\ol{\de}=\ol{\de}(\Balpha)>0$, $H_{\textrm{can}}(\eps)=H_{\textrm{can}}(\Balpha,\eps)<\infty$ and $\Theta_\eps(\delta)=\Theta_\eps(\Balpha,\delta)<\infty$ ($\delta\leq\bar{\de}$) with the following significance.
If $\de\leq\ol{\de}$ and $\K$ is an $(\Balpha,\de,\mathbb{H})$-flow with ${H_{\textrm{trig}}}/{H_{\textrm{neck}}},{H_{\textrm{neck}}}/{H_{\textrm{th}}}\geq \Theta_\eps(\de)$,
then any $(p,t)\in\D \K$ with $H(p,t)\geq H_{\textrm{can}}(\eps)$ is $\eps$-close to either
(a) a $\beta$-uniformly two-convex ancient $\al$-Andrews flow,
or (b) the evolution of a standard cap preceded by the evolution of a round cylinder.
\end{theorem}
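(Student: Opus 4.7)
The plan is to argue by compactness and contradiction, in the spirit of the earlier blow-up arguments in the paper. Fix $\eps>0$ and suppose no such $H_{\textrm{can}}$ works. Then for every $k$ we can produce $\de_k\leq\bar\de$, curvature scales $\mathbb{H}_k=(H_{\textrm{th}}^k,H_{\textrm{neck}}^k,H_{\textrm{trig}}^k)$ with ${H_{\textrm{trig}}^k}/{H_{\textrm{neck}}^k},\ {H_{\textrm{neck}}^k}/{H_{\textrm{th}}^k}\geq k$, and an $(\Balpha,\de_k,\mathbb{H}_k)$-flow $\K^k$ with a boundary point $(p_k,t_k)\in\D\K^k$ satisfying $H(p_k,t_k)\geq k$ for which the rescaled flow $\hat\K^k:=H(p_k,t_k)\cdot(\K^k-(p_k,t_k))$ fails to be $\eps$-close to either a $\beta$-uniformly two-convex ancient $\al$-Andrews flow or to the cap-cylinder model. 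By a diagonal choice we may assume $\de_k\to 0$.

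After this rescaling, each $\hat\K^k$ is an $(\al,\de_k)$-flow with $\beta$-two-convexity preserved and with $\hat H(0,0)=1$. The first step is to apply the global curvature estimate (Theorem \ref{thm_globcurvsurg}) and the convexity estimate for $(\al,\de)$-flows to obtain, for every $\Lambda<\infty$, uniform $C^\infty$ bounds on $P(0,0,\Lambda)\cap\D\hat\K^k$ together with nonnegative semidefiniteness of the second fundamental form up to an error $\eps_k\to 0$. Combined with the $\al$-Andrews inclusion bounds, this yields Arzel\`a--Ascoli compactness of the complementary closed domains in the pointed Hausdorff topology, with smooth subsequential convergence away from any surgery locus.

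Next I would set up the dichotomy according to whether surgeries accumulate near the origin in $\hat\K^k$. In the first case (no surgeries in any fixed $P(0,0,R)$ once $k$ is large), the limit is a smooth ancient mean curvature flow that inherits the $\al$-Andrews condition, mean convexity, $\beta$-two-convexity, and (by the convexity estimate) convexity; hence it is a $\beta$-uniformly two-convex ancient $\al$-Andrews flow, contradicting non-$\eps$-closeness to (a). In the second case the forced window $H_{\textrm{th}}^k\leq H(p_k,t_k)\leq H_{\textrm{trig}}^k$, combined with the divergence of the ratios ${H_{\textrm{trig}}^k}/{H_{\textrm{neck}}^k}$ and ${H_{\textrm{neck}}^k}/{H_{\textrm{th}}^k}$, pins the rescaled surgery scale to be uniformly bounded and bounded away from zero. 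The strong $\de_k$-neck condition with $\de_k\to 0$ then forces the pre-surgery time slice to converge to a round solid cylinder, and property (4) of Definition \ref{def_MCF_surgery} (surgeries are performed more precisely when necks are rounder) together with $\de_k\to 0$ forces the post-surgery domain to converge to an exact pair of standard caps. Discarding the doomed component via condition (3) of Definition \ref{def_MCF_surgery} identifies the limit with the canonical cap-cylinder model, contradicting non-$\eps$-closeness to (b).

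The main obstacle is ruling out microscopic surgeries in the second case: surgeries at scales much smaller than $H(p_k,t_k)^{-1}$ would survive in the rescaled limit and destroy both alternatives. Exactly as in the proof of the local curvature estimate for $(\al,\de)$-flows, I would exclude these via a half-space convergence argument combined with one-sided minimization (Exercise \ref{rem_one_sided_minimization}) and quantitative smallness of the cumulative Huisken monotonicity error contributed by surgeries across the three regimes; macroscopic surgeries at scales $\gg H(p_k,t_k)^{-1}$ lie outside any fixed $P(0,0,\Lambda)$ once $k$ is large, so they are harmless. Once microscopic and macroscopic surgeries are excluded, the only surviving sub-case is comparable-scale surgery, which by the previous paragraph yields precisely the cap-cylinder model and completes the contradiction.
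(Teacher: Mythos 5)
Your overall scaffolding — contradiction, rescaling by $H(p_k,t_k)$, applying the global curvature estimate (Theorem~\ref{thm_globcurvsurg}) and convexity estimate to pass to a limit, then a dichotomy according to whether surgeries persist — matches the paper's strategy and is a reasonable starting point. However, there are two substantive problems.

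First, the claim in your third paragraph that the window $H_{\textrm{th}}^k\leq H(p_k,t_k)\leq H_{\textrm{trig}}^k$ together with diverging ratios ``pins'' the rescaled surgery scale away from $0$ and $\infty$ does not follow: the window constrains $H(p_k,t_k)$ to lie between $H_{\textrm{th}}$ and $H_{\textrm{trig}}$ but says nothing about the ratio $H(p_k,t_k)/H^k_{\textrm{neck}}$, which could be anywhere in that range. What actually controls the surgery scale is the global curvature estimate (which forbids surgeries at curvature $\gg H(p_k,t_k)$ in $P(0,0,\Lambda)$, so there is no separate ``microscopic'' case to exclude — your last paragraph is largely redundant once Theorem~\ref{thm_globcurvsurg} is invoked), together with the dichotomy you already set up (macroscopic surgeries exit every fixed parabolic ball). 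This should be stated cleanly rather than attributed to the curvature window.

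Second, and more importantly, when a surgery does survive in the limit you have only pinned down the time slice at the surgery time: the strong $\delta_k$-neck hypothesis, rescaled, gives closeness to a round cylinder on a parabolic neighborhood of bounded (rescaled) duration, not for all backward time, and property~(4) of Definition~\ref{def_replace} controls only the post-surgery time slice, not the subsequent evolution. To identify the limit with the full model — a round cylinder evolution for all earlier times glued to the evolution of a standard cap afterwards — the paper uses a structural step that is missing from your argument: part~(2) of Definition~\ref{def_MCF_surgery} says the surgery necks are a \emph{minimal} collection \emph{separating} $\{H=H_{\textrm{trig}}\}$ from $\{H\leq H_{\textrm{th}}\}$. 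After rescaling, since $H_{\textrm{trig}}/H_{\textrm{neck}}\to\infty$ and $H_{\textrm{neck}}/H_{\textrm{th}}\to\infty$, both of those sets escape to infinity (the first by the curvature bounds, the second because its curvature is vanishing), so the separating neck must be unbounded in the limit, i.e.~the limit contains a line. Combined with the convexity estimate, two-convexity, and the classification of ancient $\al$-Andrews flows (Theorem~\ref{thm_structure}), a limit containing a line splits as a round cylinder, which upgrades the local neck information to the required global cap-cylinder model and also forces uniqueness of the surgery and the clearing-out of other components. Without this line argument, your proof cannot conclude alternative~(b).
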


\begin{remark}
 The structure of uniformly two-convex ancient $\al$-Andrews flows and the standard solution are discussed in \cite[Sec. 3]{HK2}.
\end{remark}

\begin{corollary}\label{cor_discarded}
For $\eps>0$ small enough, for any $(\Balpha,\de,\mathbb{H})$-flow with ${H_{\textrm{trig}}}/{H_{\textrm{neck}}},{H_{\textrm{neck}}}/{H_{\textrm{th}}}\geq \Theta_\eps(\de)$ ($\de\leq\bar{\de}$) and
$H_\textrm{th}\geq H_{\textrm{can}}(\eps)$, where $\Theta_\eps(\de)$, $\bar{\de}$ and $H_{\textrm{can}}(\eps)$ are from Theorem \ref{thm_can_nbd}, all discarded components are diffeomorphic to $\bar{D}^N$ or $\bar{D}^{N-1}\times S^1$.
\end{corollary}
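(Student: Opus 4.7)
The plan is to combine the Canonical Neighborhood Theorem with a standard neck-and-cap topological decomposition. Let $K'$ be one of the discarded components of an $(\Balpha,\de,\mathbb{H})$-flow satisfying the stated hypotheses. By Definition \ref{def_MCF_surgery}(3) we have $H>H_{\textrm{th}}$ everywhere on $\partial K'$, and by assumption $H_{\textrm{th}}\geq H_{\textrm{can}}(\eps)$, so Theorem \ref{thm_can_nbd} applies at every point $p\in\partial K'$: each such $p$ is $\eps$-close to either (a) a time slice of a $\beta$-uniformly two-convex ancient $\al$-Andrews flow, or (b) a time slice of a standard cap preceded by a round cylinder.

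First I would unpack what the canonical neighborhoods look like geometrically. By Theorem \ref{thm_structure} any ancient $\al$-Andrews flow has convex time slices, and the classification of convex $\beta$-uniformly two-convex solids recalled in \cite[Sec.~3]{HK2} forces each time slice to be either a compact strictly convex domain or, up to scaling, a round solid cylinder $\bar D^n\times \R$; the standard cap solution of case (b) shares the same local pictures. Hence, for $\eps$ small enough, every $p\in\partial K'$ lies either in an \emph{$\eps$-neck neighborhood}, where $\partial K'$ is graphical over a piece of round cylinder $S^{n-1}\times(-1,1)$, or in an \emph{$\eps$-cap neighborhood}, where $\partial K'$ is graphical over a strictly convex disk region of a standard model. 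This step is essentially bookkeeping from Theorems \ref{thm_structure} and \ref{thm_can_nbd}, and the neck/cap dichotomy becomes unambiguous once $\eps$ is small.

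The main obstacle is then the topological classification, for which I would run the standard chain-of-necks argument used in Ricci flow surgery and in \cite{huisken-sinestrari3}: starting from any boundary point, follow the axial direction of the neck through a maximal chain of overlapping $\eps$-necks inside $\partial K'$. By compactness of $\partial K'$, one of two things happens. Either the chain closes up into a loop, in which case $\partial K'$ is a closed $S^{n-1}$-fiber bundle over $S^1$; orientability of $\partial K'$ (it bounds a region in $\R^{n+1}$) together with the consistent normal direction carried by the necks forces triviality of the monodromy, giving $\partial K'\cong S^{n-1}\times S^1$, and, via the neck decomposition, $K'\cong \bar D^n\times S^1$. Or the chain terminates in an $\eps$-cap at each end, so $\partial K'$ is built from two caps joined by a tube, hence $\partial K'\cong S^n$, and by the smooth Sch\"onflies theorem $K'\cong \bar D^{n+1}$. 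The delicate technical point is to verify that for sufficiently small $\eps$ the cylindrical coordinates coming from overlapping necks agree (so that the chain is globally well-defined) and that transitions from neck into cap are unambiguous, with no cap mistakenly cut off in the middle of the chain; this is precisely the technical core of Hamilton's canonical-neighborhood classification, and goes through here without new geometric input once $\eps$ is chosen small enough.
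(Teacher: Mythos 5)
Your approach is the standard chain-of-necks argument that the original paper \cite{HK2} also uses: note that discarded components have $H>H_{\textrm{th}}\geq H_{\textrm{can}}(\eps)$ everywhere, invoke the canonical neighborhood theorem to cover the boundary by $\eps$-necks and $\eps$-caps, and then classify the resulting neck-cap decompositions. So in broad strokes this matches the intended proof. Two details, however, should be repaired.

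First, invoking the smooth Sch\"onflies theorem is both unnecessary and actually dangerous: the smooth Sch\"onflies conjecture is still open in dimension $4$, i.e.\ for $n=3$, which is within the range covered by these results. You don't need it. The decomposition you have constructed is a decomposition of the \emph{solid} discarded component $K'$, not merely of its boundary $\partial K'$: each $\eps$-neck region in $\partial K'$ comes with a solid tube $\bar{D}^n\times(-1,1)$ inside $K'$, and each $\eps$-cap region comes with a convex (hence ball-like) solid cap region. Gluing a ball to a tube to a ball along $\bar{D}^n$'s yields $\bar{D}^{n+1}$ directly, with no need to first establish $\partial K'\cong S^n$ and then fill it in. Similarly, a chain of solid tubes that closes up gives $\bar{D}^n\times S^1$ directly.

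Second, the claim that orientability of $\partial K'$ ``forces triviality of the monodromy'' of the $S^{n-1}$-bundle over $S^1$ is not correct in general: for $n-1\geq 6$ the group $\pi_0(\mathrm{Diff}^+(S^{n-1}))$ is nontrivial (it is the group of homotopy $n$-spheres), so an orientation-preserving monodromy need not be isotopic to the identity, and the abstract mapping-torus could a priori be a nontrivial bundle. What actually kills the monodromy here is the quantitative control on the neck regions: in a chain of overlapping $\eps$-necks, the transition maps between the cylindrical parametrizations are $\eps$-close (in $C^{\lfloor 1/\delta\rfloor}$, in fact) to rigid motions of $S^{n-1}\times\R$, hence isotopic to the identity once $\eps$ is small. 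This is the same mechanism as in Hamilton--Perelman and in \cite{huisken-sinestrari3}, and it is what gives a genuine product structure $\bar{D}^n\times S^1$, not just some $\bar{D}^n$-bundle. With these two fixes the proposal is sound.
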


\begin{corollary}\label{cor_topo}
Any smooth compact $2$-convex domain in $\R^N$ is diffeomorphic to a connected sum of finitely many solid tori $\bar{D}^{N-1}\times S^1$.
\end{corollary}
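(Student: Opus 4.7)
The plan is to run the mean curvature flow with surgery provided by Theorem \ref{thm_main_existence} on $K_0$, and then to read off its topology from the discarded components catalogued in Corollary \ref{cor_discarded}. By Remark \ref{remark_controlled}, $K_0$ is $\Balpha$-controlled for some $\Balpha=(\al,\be,\ga)$. I would fix $\eps>0$ small enough for Corollary \ref{cor_discarded} to apply, take $\de\leq\ol{\de}(\Balpha)$, and choose $\mathbb{H}=(H_{\textrm{trig}},H_{\textrm{neck}},H_{\textrm{th}})$ with $H_{\textrm{th}}\geq H_{\textrm{can}}(\eps)$ and the ratios $H_{\textrm{trig}}/H_{\textrm{neck}},H_{\textrm{neck}}/H_{\textrm{th}},H_{\textrm{neck}}$ all large enough to simultaneously satisfy Theorem \ref{thm_main_existence}, Theorem \ref{thm_can_nbd}, and Corollary \ref{cor_discarded}. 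This produces an $(\Balpha,\de,\mathbb{H})$-flow $\{K_t\}_{t\geq 0}$; by Remark \ref{remark_extinct} it becomes extinct at some $T<\infty$, and by Definition \ref{def_alphadelta} there are only finitely many surgery times $0<t_1<\cdots<t_k\leq T$.

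The next step is a reverse induction in time establishing that for every $t\in[0,T]$, each connected component of $K_t$ is diffeomorphic to a boundary-connected sum of finitely many copies of $\bar{D}^{N-1}\times S^1$ (with the convention that the empty sum is $\bar{D}^N$). On each smooth interval $(t_{i-1},t_i)$ the topology of $K_t$ is constant, so it suffices to analyze the transitions $K^-_{t_i}\to K^\sharp_{t_i}\to K^+_{t_i}$. The discard step removes components that by Corollary \ref{cor_discarded} are diffeomorphic to $\bar{D}^N$ or $\bar{D}^{N-1}\times S^1$, hence are already of the claimed form. The neck-surgery step replaces a neck region (a solid cylinder $\bar{D}^{N-1}\times[-1,1]$ embedded in $K^-_{t_i}$) by a pair of standard caps, each diffeomorphic to $\bar{D}^N$ and glued along its equatorial $\bar{D}^{N-1}$; conversely, to recover $K^-_{t_i}$ from $K^\sharp_{t_i}$ one deletes the two caps and attaches a $1$-handle $\bar{D}^1\times\bar{D}^{N-1}$ between the two exposed boundary disks.

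The main step, and the one requiring the most careful topological bookkeeping, is to identify this $1$-handle attachment as a boundary-connected-sum operation. If the two feet of the handle lie in distinct components $A,B$ of $K^\sharp_{t_i}$, the result is $A\natural B$; if both feet lie in a single component $A$, the result is $A\natural(\bar{D}^{N-1}\times S^1)$, since attaching a $1$-handle between two disjoint disks on $\partial\bar{D}^N$ yields a solid torus (a direct Morse-theoretic computation). Since $\bar{D}^N\natural X\cong X$ for every compact $N$-dimensional domain $X$, the inductive hypothesis is preserved at each un-surgery and the $\bar{D}^N$-discards contribute no factors. Iterating from $K_T=\emptyset$ back to $t=0$ therefore yields the claimed decomposition of $K_0$. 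The main obstacle is precisely the $1$-handle identification above; everything else follows directly from the previously quoted results.
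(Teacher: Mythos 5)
Your proof is correct and follows the standard (and intended) route: run the surgery flow to extinction, then reverse the surgeries one at a time, interpreting each un-surgery as a $1$-handle attachment. The paper states the corollary without proof, so let me just flag two implicit points in your ``main obstacle'' step. First, the identification of a $1$-handle attachment with both feet in a component $A$ as $A\natural(\bar{D}^{N-1}\times S^1)$ requires $\partial A$ to be connected (otherwise the handle could join two different boundary components of $A$, giving something else); this is automatically maintained because your inductive hypothesis --- that each component is a boundary-connected sum of solid tori --- forces connected boundary at every stage. Second, the Morse-theoretic claim that $\bar D^N$ plus a $1$-handle is $\bar D^{N-1}\times S^1$ rather than the twisted $\bar D^{N-1}$-bundle over $S^1$ uses that all the domains involved are codimension-zero submanifolds of $\R^N$ and hence orientable, which pins down the framing. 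With those two observations made explicit, the reverse induction closes and the argument is complete.
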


\begin{remark}[Convergence to level set flow]
There exists $\bar{\delta}>0$ such that if $\K^j=\{K^j_t\}_{t\in[0,\infty)}$ is a sequence of 
$(\Balpha,\de_j,\mathbb{H}_j)$-flows ($\de_j\leq\bar{\de}$) starting at a fixed initial domain $K_0$,  with $H^j_{\textrm{th}}\to\infty$, 
then $\K^j$ Hausdorff converges in $\R^N\times [0,\infty)$ to $\K$, the level set flow of $K_0$.
\end{remark}

\begin{remark} To prove Theorem \ref{thm_can_nbd}, we have to classify the limits of sequences with degenerating $\mathbb{H}$-parameters. The existence of limits is guaranteed by the global curvature estimate (Theorem \ref{thm_globcurvsurg}).
If the limit doesn't contain surgeries, then it must be a $\beta$-uniformly two-convex ancient $\alpha$-Andrews flow, and we are done. If the limit contains a surgery, then using in particular part (2) of Definition \ref{def_MCF_surgery}, the assumption that the curvature ratios degenerate, the global curvature estimate, and the convexity estimate, we see that the limit must contain a line. It is then easy to conclude that there is in fact only one surgery, and that the limit must have the structure as claimed.
Finally, we observe that potential other connected components get cleared out.
\end{remark}

\begin{remark}
To prove Theorem \ref{thm_main_existence},
we assume towards a contradiction that we have a sequence $\K^j$ of flows with degenerating $\mathbb{H}$-parameters that can be defined only on some finite maximal time intervals $[0,T_j]$.
For $j$ large, to obtain a contradiction, we want to argue that we can perform surgery and thus continue the flow beyond $T_j$. This amounts to finding suitable collection of $\de$-necks.
To this end, we first prove that the thick and the trigger part in $K^j_{T_j}$ can be separated by a union of balls centered at boundary points with $H(p)=H_{\textrm{neck}}$ and radius comparable to $H_{\textrm{neck}}^{-1}$.
We then consider a minimal collection 
of such separating balls and prove that their centers are actually centers of strong $\hat{\delta}$-necks for any $\hat{\delta}$.
It is then easy to conclude the proof.
\end{remark}

\begin{remark}
Another exciting recent development is the generic mean curvature flow of Colding-Minicozzi \cite{CM_generic,CIM,CM_uniqueness,CM_structure}.
\end{remark}

\bibliography{lectures_mcf}

\bibliographystyle{alpha}

\vspace{10mm}
{\sc Courant Institute of Mathematical Sciences, New York University, 251 Mercer Street, New York, NY 10012, USA}

\emph{E-mail:} robert.haslhofer@cims.nyu.edu

\end{document}